\newcommand{\nn}{\mathbb{N}}
\newcommand\norm[1]{\left\lVert#1\right\rVert} %norma
\newcommand\dotprod[1]{\langle#1\rangle}       %belső szorzat
\newtheorem{Def}{Definition}
\newtheorem{Lem}{Lemma}
\newtheorem{Thm}{Theorem}
\newtheorem{Cor}{Corollary}
\newtheorem{Rem}{Remark}
\newcommand*\colvec[1]{\begin{pmatrix}#1\end{pmatrix}}
\title[On generalized Berwald manifolds of dimension three]{On generalized Berwald manifolds of dimension three}
\author{Csaba Vincze}
\address{Institute of Mathematics, University of Debrecen, H-4002 Debrecen, P.O.Box 400, Hungary}
\email{csvincze@science.unideb.hu}
\author{Márk Oláh}
\address{Institute of Mathematics, University of Debrecen, Doctoral School of Mathematical and Computational Sciences, H-4002 Debrecen, P.O.Box 400, Hungary}
\email{olma4000@gmail.com}
\keywords{Finsler spaces, Generalized Berwald spaces, Intrinsic Geometry, Extremal compatible linear connections.}
\subjclass{53C60, 58B20}
\begin{document}

\begin{abstract}
%A Finsler manifold is a differentiable manifold equipped with a metric function $F$ acting on the tangent spaces to measure the length of tangent vectors. The Finslerian metric provides the measuring of the length of curves by the usual integral formula. It is a natural generalization of the Riemannian metric. 

A linear connection on a Finsler manifold is called compatible to the Finsler function if its parallel transports preserve the Finslerian length of tangent vectors. Generalized Berwald manifolds are Finsler manifolds equipped with a compatible linear connection. In the paper we present a general and intrinsic method to characterize the compatible linear connections on a Finsler manifold of dimension three. We prove that if a compatible linear connection is not unique then the indicatrices must be Euclidean surfaces of revolution. The surplus freedom of choosing compatible linear connections is related to Euclidean symmetries. The unicity of the solution of the compatibility equations can be provided by some additional requirements. Following the idea in \cite{V14} we are also looking for the so-called extremal compatible linear connection minimizing the norm of its torsion at each point of the manifold.  
\end{abstract}

\maketitle
\footnotetext[1]{Cs. Vincze is supported by the EFOP-3.6.1-16-2016-00022 project. The project is co-financed by the European Union and the European Social Fund.}
\footnotetext[2]{Cs. Vincze and M. Oláh are also supported by TKA-DAAD 307818.}

\section{Notation and terminology}

Let $M$ be a smooth manifold with a local coordinate system ($u^1, \ldots, u^n$). The induced local coordinate system on the tangent manifold $TM$ consists of the functions $x^1, \ldots, x^n$ and $y^1, \ldots, y^n$ given by $x^i(v):=u^i\circ \pi (v)=u^i(p)=:p^i$, where  $\pi \colon TM \to M$ is the canonical projection (the "foot map") and $y^i(v)=v(u^i)$, $i \in \{1, \ldots, n\}$.  
\begin{Def} A Finsler function is a continuous function $F\colon TM\to \mathbb{R}$ satisfying the following conditions:  $\displaystyle{F}$ is smooth on the complement of the zero section \emph{(}regularity\emph{)}, $\displaystyle{F(tv)=tF(v)}$ for all $\displaystyle{t> 0}$ \emph{(}positive homogeneity\emph{)} and the Hessian 
\[ (g_{ij})=\left(\frac{\partial^2 E}{\partial y^i \partial y^j}\right) \]
of the energy function $E=F^2/2$ is positive definite at all nonzero elements $\displaystyle{v\in T_pM}$ \emph{(}strong convexity\emph{)}. The pair $(M,F)$ is called a Finsler manifold.
\end{Def}

\begin{Def} Let $(M,F)$ be a Finsler manifold. A linear connection $\nabla$ on $M$ is called \emph{compatible} to $F$ if the parallel transports with respect to $\nabla$ preserve the Finslerian length of tangent vectors. Then the triplet $(M,F,\nabla)$ is called a generalized Berwald manifold.
\end{Def}

It is well-known that any linear connection $\nabla$ determines a horizontal distribution spanned by the \emph{horizontal vector fields}
 \[ X_i^{h}:=\frac{\partial}{\partial x^i}-y^j {\Gamma}^k_{ij}\circ \pi \frac{\partial}{\partial y^k}, \quad i \in \{1, \ldots, n\}. \]
The vanishing of the derivative of $F$ along the integral curves of horizontal vector fields characterizes the compatibility of $\nabla$ to $F$ \cite{V14}. Therefore a linear connection $\nabla$ is compatible to a Finsler function if and only if  
\begin{equation}
\label{compeqnatur}
\frac{\partial F}{\partial x^i}-y^j {\Gamma}^k_{ij}\circ \pi \frac{\partial F}{\partial y^k}=0, \quad i \in \{1, \ldots, n\}.
\end{equation}
System (\ref{compeqnatur}) is called \emph{compatibility equations}.

\begin{Def} A Riemannian metric $\gamma$ on a Finsler manifold is \emph{compatible} to $F$ if every linear connection compatible to $F$ is metrical with respect to $\gamma$. 
\end{Def}

It is well-known that the so-called \emph{averaged Riemannian metric} (defined by the integration of the metric components $g_{ij}$ over the indicatrices) is compatible to the Finsler function, for details see \cite{V5} and \cite{V11}. Compatible Riemannian metrics can be introduced in many different ways as well \cite{Cram}, \cite{Mat1}. They can be directly given as the Riemannian part of the Finsler function in case of a Randers manifold \cite{Vin1}. In what follows, let $\gamma$ be a given compatible Riemannian metric to $F$ with its uniquely determined L\'{e}vi-Civita connection $\nabla^*$.
The horizontal vector fields generated by $\nabla^*$ are
\[ X_i^{h^*}:=\frac{\partial}{\partial x^i}-{y^j \Gamma}^{*k}_{ij}\circ \pi \frac{\partial}{\partial y^k},  \quad  i \in \{1, \ldots, n\}. \]

\section{The compatibility equations in terms of the torsion components}

Following the idea in \cite{V14}, we formulate the compatibility equations \eqref{compeqnatur} in terms of the torsion components of the compatible linear connection $\nabla$ instead of the connection parameters. Using the Christoffel process, we have the formula
\begin{equation}
\label{torsion}
\Gamma_{ij}^r=\Gamma_{ij}^{*r}-\frac{1}{2}\left(T^{l}_{jk}\gamma^{kr}\gamma_{il}+T^{l}_{ik}\gamma^{kr}\gamma_{jl}-T_{ij}^r\right),
\end{equation}
and system \eqref{compeqnatur} can be written as 
\[ X_i^{h^*}F+\frac{1}{2}y^j \left(T^{l}_{jk}\gamma^{kr}\gamma_{il}+T^{l}_{ik}\gamma^{kr}\gamma_{jl}-T_{ij}^r\right)\circ \pi \frac{\partial F}{\partial y^r}=0, \quad  i \in \{1, \ldots, n\}. \]
Evaluating at a nonzero tangent vector $v \in T_p M$, we have the inhomogeneous system  
\[ y^j (v)\left(T^{l}_{jk}\gamma^{kr}\gamma_{il}+T^{l}_{ik}\gamma^{kr}\gamma_{jl}-T_{ij}^r\right)(p) \frac{\partial F}{\partial y^r}(v)=-2X_i^{h^*}F(v), \quad  i \in \{1, \ldots, n\} \]
of linear equations for the components $T_{ab}^{c}(p)$. The set $A_p(v)$ of its solutions is an affine subspace of the vector space $\wedge^2 T_p^*M \otimes T_pM$ spanned by 
\[ du^i_p \wedge du^j_p \otimes \left( \frac{\partial}{\partial u^k}\right)(p), \quad 1\leq i < j\leq n, \quad  k\in \{1, \ldots, n\}. \] 
It is of dimension $\displaystyle{\binom{n}{2}n}$. Running through all nonzero tangent vectors $v \in T_p M$, the solution set 
$$A_p=\bigcap_{v\in T_pM\setminus \{ \bf{0}\}} A_p(v)$$
containing the restrictions of the torsion tensors of all compatible linear connections to the Cartesian product $\displaystyle{T_pM\times T_pM}$ is also an affine subspace of $\displaystyle{\wedge^2 T_p^*M \otimes T_pM}$ (as an intersection of affine subspaces).

\begin{Thm}
\label{smoothness} If $(M, F, \nabla)$ is a connected generalized Berwald manifold, then the mapping $p\in M\mapsto A_p\subset \wedge^2 T_p^*M \otimes T_pM$ is a smooth affine distribution of constant rank on the torsion tensor bundle.
\end{Thm}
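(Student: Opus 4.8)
The plan is to reduce the assertion to a statement about the associated \emph{linear} distribution. Write $\vec{A}_p$ for the common translation space of the affine subspaces $A_p$; equivalently $\vec{A}_p$ is the solution space of the \emph{homogeneous} system obtained from the system of linear equations for $T_{ab}^{c}(p)$ displayed above by deleting the right-hand sides $-2X_i^{h^*}F(v)$. Since $(M,F,\nabla)$ is a generalized Berwald manifold, the torsion tensor $T$ of $\nabla$ is a globally defined smooth section of $\wedge^2 T^*M\otimes TM$ whose value at each point $p$ lies in $A_p$; hence $A_p=T_p+\vec{A}_p\neq\varnothing$ for every $p$, and it is enough to prove that $p\mapsto\vec{A}_p$ is a smooth vector subbundle of constant rank, for then $A=T+\vec{A}$ is a smooth affine distribution of the same constant rank. (This is the first of exactly two places where the existence of a global compatible connection is indispensable.) Finally, since $A_p$, and therefore $\vec{A}_p$, does not depend on the auxiliary compatible Riemannian metric, I take $\gamma$ to be the averaged Riemannian metric, so that $\mathfrak{o}(F_p)\subseteq\mathfrak{so}(\gamma_p)$, where $\mathfrak{o}(F_p)$ denotes the Lie algebra of the linear isometry group of the Minkowski norm $F_p$, i.e.\ the space of endomorphisms $\Phi$ of $T_pM$ with $dF_p|_v(\Phi v)=0$ for all $v\neq\mathbf{0}$ (averaging is invariant under such isometries).

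The main step is a pointwise, purely algebraic description of $\vec{A}_p$. For $S\in\wedge^2 T_p^*M\otimes T_pM$ let $C_p(S)\in T_p^*M\otimes\mathfrak{so}(\gamma_p)$ denote the associated contorsion, that is, the preimage of $S$ under the antisymmetrisation map $\mathrm{Alt}\colon T_p^*M\otimes\mathfrak{so}(\gamma_p)\to\wedge^2 T_p^*M\otimes T_pM$; this preimage exists and is unique because, by the Christoffel process, a $\gamma$-metric connection is determined by its torsion. Applying the Christoffel formula \eqref{torsion} to an arbitrary torsion $S$, the expression $S^l_{jk}\gamma^{kr}\gamma_{il}+S^l_{ik}\gamma^{kr}\gamma_{jl}-S^r_{ij}$ equals $-2\,C_p(S)^r_{ij}$, and since $\frac{\partial F}{\partial y^r}(v)=g_{rs}(v)y^s(v)/F(v)$ by homogeneity, the $i$-th homogeneous equation evaluated at $v$ becomes $y^j(v)\,C_p(S)^r_{ij}\,\frac{\partial F}{\partial y^r}(v)=dF_p|_v\!\big(C_p(S)_i\,v\big)=0$, where $C_p(S)_i\in\mathfrak{so}(\gamma_p)$ is the value of the contorsion in the $i$-th coordinate direction. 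Running over all $i$ and all $v\neq\mathbf{0}$, this says precisely that $C_p(S)$ takes its values in $\mathfrak{o}(F_p)$, i.e.\ $C_p(S)\in T_p^*M\otimes\mathfrak{o}(F_p)$; equivalently $S\in\mathrm{Alt}\big(T_p^*M\otimes\mathfrak{o}(F_p)\big)$. Hence $\vec{A}_p=\mathrm{Alt}\big(T_p^*M\otimes\mathfrak{o}(F_p)\big)$, with no reference to global connections.

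It remains to show that $p\mapsto\mathfrak{o}(F_p)$ is a smooth subbundle of $\mathrm{End}(TM)$ of constant rank, and this is the point I expect to be genuinely delicate: a priori the dimension of the isometry algebra of the Minkowski norm $F_p$ could jump from point to point. It is here that the compatible connection $\nabla$ enters a second time. For any curve $c$ from $p$ to $q$, the $\nabla$-parallel transport $P_c\colon T_pM\to T_qM$ is, by compatibility, a linear isometry $(T_pM,F_p)\to(T_qM,F_q)$, so conjugation by $P_c$ carries $\mathfrak{o}(F_p)$ isomorphically onto $\mathfrak{o}(F_q)$. Connectedness of $M$ then forces $\dim\mathfrak{o}(F_p)$ to be constant, and it exhibits $\mathfrak{o}(F)\subseteq\mathrm{End}(TM)$ as a $\nabla$-parallel subset of locally constant rank, hence as a smooth ($\nabla$-parallel) subbundle, locally trivialised by parallel transport of a basis. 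Consequently $T^*M\otimes\mathfrak{o}(F)$ is a smooth subbundle of $T^*M\otimes\mathfrak{so}(\gamma)$ of constant rank; the bundle morphism $\mathrm{Alt}$ is fibrewise injective on $T^*M\otimes\mathfrak{so}(\gamma)$ --- once more because a metric connection is determined by its torsion --- so its image $\vec{A}=\mathrm{Alt}\big(T^*M\otimes\mathfrak{o}(F)\big)$ is a smooth vector subbundle of $\wedge^2 T^*M\otimes TM$ of constant rank, and $A=T+\vec{A}$ is the asserted smooth affine distribution of constant rank $n\cdot\dim\mathfrak{o}(F_p)$. As a by-product this identifies the surplus freedom in choosing a compatible connection with the infinitesimal linear symmetries of the indicatrices, which is the theme running through the rest of the paper.
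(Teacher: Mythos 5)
Your argument is correct, and it runs on the same engine as the paper's proof --- invariance of the pointwise solution spaces under the parallel transports of the given compatible connection $\nabla$, which preserve both $F$ (compatibility) and $\gamma$ ($\gamma$-metricity), combined with connectedness --- but the execution is genuinely different. The paper stays with the inhomogeneous system in coordinates: it writes down the homogeneous equations defining the directional space $H_q$ and checks by an index computation, using $F\circ\varphi=F$ and the $\gamma$-orthogonality relations for the transport matrix $(Q_i^j)$, that $\varphi_\ast T_p\in H_q$ for every $T_p\in H_p$; constant rank and smoothness are then read off from this invariance. You instead decompose the problem: the torsion of $\nabla$ is a smooth global section lying in each $A_p$, so everything reduces to the directional spaces, which you identify through the Christoffel formula \eqref{torsion} as the alternations of contorsions valued in the infinitesimal linear symmetries of the indicatrix, i.e.\ $\vec{A}_p=\mathrm{Alt}\bigl(T_p^*M\otimes(\mathfrak{so}(\gamma_p)\cap\mathfrak{o}(F_p))\bigr)$; conjugation by parallel transport visibly preserves this algebra, and transporting a basis gives local frames, hence smoothness and constant rank $n\cdot\dim(\mathfrak{so}(\gamma_p)\cap\mathfrak{o}(F_p))$. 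This buys a cleaner structural statement (the surplus freedom is exactly the symmetry algebra of the indicatrix, anticipating the three-dimensional analysis later in the paper), and you are more explicit than the paper about why smoothness and constancy of the rank actually follow. One caveat: your switch to the averaged metric is justified by the unproved assertion that $A_p$ is independent of the auxiliary compatible Riemannian metric; since the pointwise solution set is defined through the $\gamma$-contorsion, this is not obvious and should be dropped. It is also unnecessary: keep the fixed compatible metric $\gamma$ and argue with $\mathfrak{g}_p=\mathfrak{so}(\gamma_p)\cap\mathfrak{o}(F_p)$ throughout --- the parallel transports of $\nabla$ are $\gamma$-isometries because $\gamma$ is a compatible metric, so conjugation still carries $\mathfrak{g}_p$ onto $\mathfrak{g}_q$ and your proof goes through verbatim.
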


\begin{proof} Let  $\displaystyle{H_p}\subset \wedge^2 T_p^*M \otimes T_pM$ be the directional space of the affine subspace $A_p$, and let $\nabla$ be a compatible linear connection. We are going to prove that  
\begin{equation}
\label{action}
T_q (v,w):=\varphi\circ  T_p (\varphi^{-1}(v), \varphi^{-1}(w))
\end{equation}
belongs to the directional space $H_q$ for any $T_p\in H_p$, where $\varphi$ is a linear isometry between the tangent spaces at the corresponding points $p$ and $q$. Such an isometry is generated by parallel transports with respect to $\nabla$. Let $(Q_i^j)$ be the matrix representation of $\varphi$, $(P_j^i)=\left(Q_i^j\right)^{-1}$. The directional space $H_q \subset \wedge^2 T_q^*M \otimes T_qM$ can be given as 
\begin{equation}
\label{condextc} \frac{1}{2}y^j\circ \varphi (v) \left(T^{l}_{jk}\gamma^{kr}\gamma_{il}+T^{l}_{ik}\gamma^{kr}\gamma_{jl}-T_{ij}^r\right)(q) \frac{\partial F}{\partial y^r}\circ \varphi (v)=0,  \quad  i \in \{1, \ldots, n\},
\end{equation}
where $v$ runs through the nonzero elements of $T_pM$. So does $\varphi(v)$ in $T_q M$. Therefore (\ref{condextc}) is equivalent to
$$ \frac{1}{2}y^b (v)\left(T^{l}_{jk}Q_b^j P_r^c \gamma^{kr}\gamma_{il}+T^{l}_{ik}Q_b^j P_r^c\gamma^{kr}\gamma_{jl}-T_{ij}^rQ_b^j P_r^c\right)(q) \frac{\partial F}{\partial y^c}(v)=0,$$
where  $ i \in \{1, \ldots, n\}$ and $v\in T_pM$. Indeed, $\displaystyle{\varphi^j=y^j\circ \varphi=Q_b^j y^b}$ and the invariance property $F\circ \varphi=F$ imply that 
$$\frac{\partial F}{\partial y^c}(v)=\frac{\partial F\circ \varphi}{\partial y^c}(v)=Q_c^r\frac{\partial F}{\partial y^r}\circ \varphi(v) \ \Rightarrow \ P_r^c\frac{\partial F}{\partial y^c}(v)=\frac{\partial F}{\partial y^r}\circ \varphi(v).$$
Using the equalities
$$Q_a^i Q_b^j\gamma_{ij}(q)=\gamma_{ab}(p), \ P_i^a P_j^b \gamma^{ij}(q)=\gamma^{ab}(p), \ Q_b^j \gamma_{jl}(q)=P_l^j \gamma_{jb}(p) \ \ \textrm{and}\ \ P_j^b \gamma^{jk}(q)=Q_j^k\gamma^{jb}(p),$$
we have  
$$\frac{1}{2}y^b (v)\left(T^{l}_{jk}(q)Q_b^j Q_r^k \gamma^{rc}(p)\gamma_{il}(q)+T^{l}_{ik}(q)Q_b^j Q_r^k\gamma^{rc}(p)\gamma_{jl}(q)-T_{ij}^r(q) Q_b^j P_r^c\right) \frac{\partial F}{\partial y^c}(v)=0.$$
Taking the product by the matrix $(Q^i_a)$, an equivalent system of equations is
$$\frac{1}{2}y^b (v)\left(T^{l}_{jk}(q)Q_a^i Q_b^j Q_r^k \gamma^{rc}(p)\gamma_{il}(q)+T^{l}_{ik}(q)Q_a^i Q_b^j Q_r^k\gamma^{rc}(p)\gamma_{jl}(q)-T_{ij}^r (q)Q_a^i Q_b^j P_r^c\right) \frac{\partial F}{\partial y^c}(v)=0,$$
$$\frac{1}{2}y^b (v)\left(T^{l}_{jk}(q)P_l^i Q_b^j Q_r^k \gamma^{rc}(p)\gamma_{ia}(p)+T^{l}_{ik}(q)Q_a^i P_l^j Q_r^k\gamma^{rc}(p)\gamma_{jb}(p)-T_{ij}^r (q)Q_a^i Q_b^j P_r^c\right) \frac{\partial F}{\partial y^c}(v)=0,$$
$$\frac{1}{2}y^b (v)\left(T_{br}^i \gamma^{rc}\gamma_{ia}+ T_{ar}^j \gamma^{rc}\gamma_{jb}- T_{ab}^c\right)(p) \frac{\partial F}{\partial y^c}(v)=0, \quad  a \in \{1, \ldots, n\}$$ 
for any nonzero $v\in T_pM$, where $T_{ab}^c(p):=T_{ij}^r (q)Q_a^i Q_b^j P_r^c$, i.e.,  $T_{ij}^r(q)=P^a_i P_j^b T_{ab}^c(p) Q_c^r$ as in formula (\ref{action}).
\end{proof}

Now we rewrite the compatibility equations in a more compact form. Due to the skew-symmetry, it is enough to keep the components $T_{ab}^{c}$ with indices $a<b$. Denoting by $\sigma_{ab;i}^{c}$ the coefficient of $T_{ab}^{c}$ ($a<b$) in the $i$-th equation, we have 
\[ \sigma_{ab;i}^{c}=\left(y^a \gamma^{br}-y^b\gamma^{ar}\right)\frac{\partial F}{\partial y^r}\gamma_{ic}+\left(\delta_i^a \gamma^{br}-\delta_i^b\gamma^{ar}\right)\frac{\partial F}{\partial y^r}y^j\gamma_{jc}-\left(\delta_i^a y^b-\delta_i^b y^a\right)\frac{\partial F}{\partial y^c}. \]
If $\displaystyle{(\partial/\partial u^1, \ldots, \partial/\partial u^n)}$ is an orthonormal basis at $p\in M$ with respect to the compatible Riemannian metric $\gamma$, then
\begin{equation}
\sigma_{ab;i}^{c}=\delta_i^c\left(y^a \frac{\partial F}{\partial y^b}-y^b\frac{\partial F}{\partial y^a}\right)+\delta_i^a \left(y^c\frac{\partial F}{\partial y^b}-y^b\frac{\partial F}{\partial y^c}\right)-\delta_i^b \left(y^c\frac{\partial F}{\partial y^a}-y^a\frac{\partial F}{\partial y^c}\right).
\end{equation}
Introducing the notation
\begin{equation}
\label{fab}
f_{ab}=y^a \frac{\partial F}{\partial y^b}-y^b\frac{\partial F}{\partial y^a},
\end{equation}
the coefficients are
\begin{equation}
\label{coeffort}
\sigma_{ab;i}^{c}=\delta_i^c f_{ab}+\delta_i^a f_{cb}+\delta_i^b f_{ac},
\end{equation}
and the compatibility equations take the form
\begin{equation}
\label{compeqinnprod}
 \sum_{a<b,c} \sigma_{ab;i}^{c} T_{ab}^{c} = -2X_i^{h^*}F, \quad  i \in \{1, \ldots, n\},
\end{equation}
where the summation symbol means summing over the indices
\[ \{ (a,b,c) \in \nn_n \times \nn_n \times \nn_n \, | \, a < b \}, \quad \nn_n:=\{1, \dots, n\}. \]

\subsection{The case of Finsler surfaces} In case of dimension two, the compatibility equations are easy to solve because we have only two variables $T_{12}^1$ and $T_{12}^2$ with coefficients $\sigma_{12; i}^1=2\delta_i^1 f_{12}$ and $\sigma_{12; i}^2=2\delta_{i}^2f_{12}$. The compatibility equations are 
$$f_{12} T_{12}^{1} = -X_1^{h^*}F, \quad f_{12} T_{12}^{2} = -X_2^{h^*}F.$$
The uniquely determined solutions can be expressed (for example) in terms of quantities given by integration along the Euclidean unit circle with respect to the compatible Riemannian metric in the tangent spaces:
$$T_{12}^{1} = -\int_{c_*} f_{12} X_1^{h^*}F \bigg/\int_{c_*} f^2_{12}, \quad T_{12}^{2} = -\int_{c_*} f_{12} X_2^{h^*}F\bigg/\int_{c_*} f^2_{12}.$$
If $f_{12}$ is identically zero in $T_pM$, then the Finslerian unit circle is homothetic to the Riemannian unit circle because $F\circ c_*$ is constant. In case of a generalized Berwald surface, this means that it is a Riemannian surface due to the invariance of the indicatrices under the parallel transports given by a linear connection. Another aspects and examples in case of dimension 2 can be found in \cite{VOM}, \cite{KVO} and \cite{KVMO}.
 
\subsection{The extremal compatible linear connection} Consider a generalized Berwald space. As we have seen above, the pointwise solution sets $A_p$ of the compatibility equations are affine subspaces of the vector spaces $\displaystyle{\wedge^2 T_p^*M \otimes T_pM}$ ($p \in M$). This means that the global solution is not necessarily unique even we have a global solution. The unicity can be provided by some additional requirements. Following the idea in \cite{V14} we are looking for the so-called extremal compatible linear connection minimizing the norm of its torsion at each point of the manifold in the following sense.
%:  the system 
%\[ du^i \wedge du^j \otimes \frac{\partial}{\partial u^k} \quad (1\leq i< j \leq n, k=1, \ldots, n) \]
%is an orthonormal basis at the point $p\in M$ with respect to the metric in Definition \ref{indmetric} provided that $\partial/\partial u^1, \ldots, \partial/\partial u^n$ form an orthonormal basis at the point $p\in M$ with respect to $\gamma$.

\begin{Def}
\label{indmetric} Let $(M,F)$ be a Finsler manifold with a compatible Riemannian metric $\gamma$, and suppose that the coordinate vector fields $\partial/\partial u^1, \ldots, \partial/\partial u^n$ form an orthonormal basis at a point $p\in M$ with respect to $\gamma$. We introduce a Riemannian metric on $\wedge^2 T_p^*M \otimes T_pM$ in the following way:
if $\displaystyle{T=\sum_{i<j,k} T_{ij}^k du^i \wedge du^j \otimes \frac{\partial}{\partial u^k}}$, then
\begin{equation}
\label{torsionnorm}
\dotprod{T_p, S_p}: =\sum_{i<j,k} T_{ij}^k(p)S_{ij}^k(p).
\end{equation}
%\ \ \text{and} \ \ \norm{T_p}^2=  \sum_{i<j,k} {T_{ij}^k(p)}^2
The extremal compatible linear connection on a generalized Berwald space $M$ is the uniquely determined compatible linear connection whose torsion minimizes the norm arising from \emph{(}\ref{torsionnorm}\emph{)}.
\end{Def}

Since the solution set $A_p\subset \wedge^2 T_p^*M \otimes T_pM$ of the compatibility equations at $p\in M$ is an affine (especially, convex) set, its closest element to the origin is uniquely determined. In case of a generalized Berwald manifold the pointwise extremal solutions form a continuous section of the torsion bundle. Therefore the connection parameters of the corresponding linear connection are obviously continuous. Conversely, the existence of a continuous compatible linear connection implies that the manifold is monochromatic and, consequently, it is a generalized Berwald manifold; for details see \cite{BM} and \cite{V14}.

\section{The compatibility equations in 3D}

In what follows, we are going to solve the compatibility equations for 3-dimensional Finsler manifolds. All compatible linear connections will be determined by their torsion components and we will find the extremal one among them. The trick is to group the variables and work in a 3D space three times instead of the $9$-dimensional fiber of the torsion tensor bundle.

\subsection{Geometric structures on the tangent spaces} Let $p$ be a given point of $M$. 
\begin{enumerate}
\item The Finsler function $F$ restricted to $T_p M$ is a Minkowski norm, i.e., the level surface $F^{-1}(1)\cap T_p M$ is the boundary of a  (strictly) convex body containing the origin in its interior. It is the Finslerian indicatrix or, unit sphere, at the point $p\in M$.
\item The compatible Riemannian metric $\gamma$ restricted to $T_p M$ is the standard Euclidean metric after choosing the orthonormal basis $\displaystyle{\partial/\partial u^1, \partial/\partial u^2,  \partial/\partial u^3}$ at the point $p\in M$. Its indicatrix is the Euclidean unit sphere. 
\end{enumerate}

In what follows we simply write $\dotprod{v,w}$ instead of $\gamma_p(v,w)$. Orthogonality, norms and normal vectors are taken in the usual Euclidean sense. Since the Finslerian spheres are the level sets of $F$, the Euclidean gradient vector field 
\[ G:=\mathrm{grad} \ F=  \dfrac{\partial F}{\partial y^1}\dfrac{\partial}{\partial y^1}+\dfrac{\partial F}{\partial y^2}\dfrac{\partial}{\partial y^2}+\dfrac{\partial F}{\partial y^3}\dfrac{\partial}{\partial y^3} \sim \left( \dfrac{\partial F}{\partial y^1},\dfrac{\partial F}{\partial y^2},\dfrac{\partial F}{\partial y^3} \right) = \left(\partial_1 F, \partial_2 F, \partial_3 F \right)  \]
gives the (outer) normals. The (outer) normal vectors of the tangent planes of a Euclidean sphere are given by the radial or Liouville vector field
\[ C:= y^1 \dfrac{\partial}{\partial y^1}+y^2\dfrac{\partial}{\partial y^2}+y^3\dfrac{\partial}{\partial y^3} \sim \left( y^1,y^2,y^3 \right). \]
In 3D two tangent planes are either parallel or intersect in a common line. 

\begin{Def} {\emph{\cite{V14}}} We call a nonzero vector $v \in T_p M$ 
\begin{itemize}
\item a \emph{vertical contact point}, if the Finslerian and Riemannian spheres have the same tangent planes at $v$, i.e., the Euclidean normal vectors $G_v$ and $C_v$ are parallel;
\item a \emph{horizontal contact point}, if $X_i^{h^*}F(v)=0$ for all $i\in \{1, \ldots, n\}$.
\end{itemize}
The tangent space $T_p M$ is called vertical/horizontal contact if all of its nonzero elements are vertical/horizontal contact.
\end{Def}

We briefly summarize some basic facts about the vertical and horizontal contact points, for details we refer to \cite{V14}.

\begin{itemize}
\item At vertical contact points the coefficients $\sigma_{ab;i}^{c}$ in the compatibility equations are zero. 
\item If the compatibility equations are solvable, i.e., we have a generalized Berwald manifold, then all vertical contact points are horizontal contact. %(if the left-hand sides are zero, the right-hand sides must also be zero in order to have a solution of the compatibility equations).
\item If we have a \emph{connected} generalized Berwald manifold with a vertical contact tangent space, then it is a Riemannian manifold and its extremal compatible linear connection is the L\'{e}vi--Civita connection of the compatible Riemannian metric. 
\item If $T_pM$ is a horizontal contact tangent space, then the torsion of the extremal compatible linear connection at $p$ is zero because the system of the compatibility equations is homogeneous.
\end{itemize}

Suppose that the tangent space $T_pM$ contains at least one element $v$ that is not vertical contact. Since $G_v$ and $C_v$ are linearly independent 
\begin{itemize}
\item we can use the Gram--Schmidt process to construct an orthogonal pair by substituting $G$ with
$$G^{\bot}_v:=G_v-\frac{\dotprod{G_v,C_v}}{\dotprod{C_v, C_v}}C_v=G_v-\frac{C_v F}{\dotprod{v,v}}C_v=G_v-\frac{F(v)}{\dotprod{v,v}}C_v$$
because of the first order homogeneity of the Finsler function. 
\item If $C \times G$ is the cross product of the vector fields $C$ and $G$, then
\end{itemize}
\begin{equation}
\label{n}
C\times G=\begin{vmatrix}
e_1 & e_2 & e_3 \\
y^1 & y^2 & y^3 \\
\partial_1 F & \partial_2 F & \partial_3 F
\end{vmatrix} = \colvec{y^2 \partial_3  F - y^3 \partial_2  F \\ y^3 \partial_1  F - y^1 \partial_3  F \\ y^1 \partial_2  F - y^2 \partial_1  F } = \colvec{f_{23}\\f_{31}\\f_{12}}.
\end{equation}

It is clear that the cross product vanishes at the vertical contact elements in the tangent spaces and vice versa. According to the orthogonality to both $C_v$ and $G_v$, it lies in the tangent planes of both the Euclidean and the Finslerian spheres passing through $v$. Finally, $(C,G^{\bot},C\times G)$ is an orthogonal frame on the complement of the vertical contact elements.  

\begin{Rem} {\emph{For any $p\in M$, the tangent space $T_p M$ has a vertical contact point. Otherwise the vector field $C\times G$ would be a (continuous) non-vanishing tangent vector field to the 3-dimensional Euclidean unit sphere contradicting the hedgehog theorem. Another possible argument independently of the dimension is based on the furthest point property and convexity: the tangent plane to the Finslerian unit sphere at the furthest point $v$ from the origin with respect to the Euclidean metric must be orthogonal to $v$.}}
\end{Rem}

\begin{Thm} In case of a three-dimensional Finsler manifold, the compatibility equations can be written in the form
\begin{equation}
\label{comp1}
\left<C\times G,\begin{pmatrix}\  -T_{12}^3+T_{13}^2+T_{23}^1\\-2T_{13}^1\\ \ \ 2T_{12}^1 \end{pmatrix}\right> = -2X_1^{h^*}F,
\end{equation}
\begin{equation}
\label{comp2}
\left<C\times G,\begin{pmatrix} 2T_{23}^2\\-T_{12}^3-T_{13}^2-T_{23}^1\\2T_{12}^2 \end{pmatrix}\right> = -2X_2^{h^*}F,
\end{equation}
\begin{equation}
\label{comp3}
\left<C\times G,\begin{pmatrix} \ \ 2T_{23}^3\\-2T_{13}^3 \\T_{12}^3+T_{13}^2-T_{23}^1 \ \ \end{pmatrix}\right> = -2X_3^{h^*}F,
\end{equation}
where the new variables are groups of the torsion components and
$$G:=\mathrm{grad}\  F \sim \left( \dfrac{\partial F}{\partial y^1},\dfrac{\partial F}{\partial y^2},\dfrac{\partial F}{\partial y^3} \right) = \left(\partial_1 F, \partial_2 F, \partial_3 F \right). $$
\end{Thm}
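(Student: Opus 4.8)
The statement is a bookkeeping reformulation of the orthonormal compatibility system \eqref{compeqinnprod} with coefficients \eqref{coeffort}, so the plan is to specialize to $n=3$ and expand the sum $\sum_{a<b,c}\sigma_{ab;i}^{c}T_{ab}^{c}$ explicitly for each $i\in\{1,2,3\}$. Recall that for $a<b$ the only admissible index pairs are $(1,2),(1,3),(2,3)$, giving the nine independent torsion components; and that the quantities $f_{ab}$ of \eqref{fab} satisfy $f_{aa}=0$ and $f_{ab}=-f_{ba}$, so in dimension three every $f_{ab}$ is one of $\pm f_{12},\pm f_{23},\pm f_{31}$.

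\textbf{Step 1: evaluate the coefficients.} Fix the equation index $i$ and, for each of the nine pairs $\big((a,b),c\big)$, compute $\sigma_{ab;i}^{c}=\delta_i^c f_{ab}+\delta_i^a f_{cb}+\delta_i^b f_{ac}$ from \eqref{coeffort}, using $f_{aa}=0$. For $i=1$, since $b=1$ is impossible when $a<b$, only the pairs with $c=1$ or $a=1$ contribute, and one gets $\sigma_{12;1}^{1}=2f_{12}$, $\sigma_{12;1}^{3}=-f_{23}$, $\sigma_{13;1}^{1}=-2f_{31}$, $\sigma_{13;1}^{2}=f_{23}$, $\sigma_{23;1}^{1}=f_{23}$, with all remaining coefficients zero.

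\textbf{Step 2: regroup and recognize the cross product.} Collecting the surviving terms of $\sum_{a<b,c}\sigma_{ab;1}^{c}T_{ab}^{c}$ according to which of $f_{23},f_{31},f_{12}$ they multiply gives $f_{23}\big(-T_{12}^{3}+T_{13}^{2}+T_{23}^{1}\big)+f_{31}\big(-2T_{13}^{1}\big)+f_{12}\big(2T_{12}^{1}\big)$. By \eqref{n} the triple $(f_{23},f_{31},f_{12})$ is exactly the component vector of $C\times G$, so this is precisely the inner product in \eqref{comp1}, while the right-hand side $-2X_1^{h^*}F$ is untouched. Carrying out the identical computation for $i=2$ (only the pairs with $b=2$ or $a=2$ survive) and $i=3$ produces \eqref{comp2} and \eqref{comp3} in the same way.

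\textbf{Main obstacle.} There is no conceptual difficulty here; the only thing requiring care is the sign and index bookkeeping imposed by the convention $a<b$, which breaks the naive cyclic symmetry among the three equations, combined with consistent use of $f_{ab}=-f_{ba}$ to rewrite every $f$-coefficient in terms of $f_{12},f_{23},f_{31}$. One could instead introduce the fully skew-symmetrized symbol $\tT^{c}_{ab}$ to make a cyclic structure manifest and deduce \eqref{comp2}--\eqref{comp3} from \eqref{comp1} by relabeling, but the direct expansion above is the shortest route.
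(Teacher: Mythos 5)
Your proposal is correct and is essentially the paper's own proof: the paper likewise specializes the orthonormal coefficients $\sigma_{ab;i}^{c}=\delta_i^c f_{ab}+\delta_i^a f_{cb}+\delta_i^b f_{ac}$ to $n=3$ (organizing the index-coincidence cases in a table), writes out the $3\times 9$ coefficient matrix, and regroups so that the triple $(f_{23},f_{31},f_{12})$ of \eqref{n} is recognized as $C\times G$, giving \eqref{comp1}--\eqref{comp3}. One small slip in your sketch of the remaining cases: for $i=2$ the surviving terms are those with $a=2$, $b=2$ \emph{or} $c=2$ (for instance $\sigma_{13;2}^{2}=f_{13}$, which produces the $-T_{13}^{2}$ entry of \eqref{comp2}), so the parenthetical ``only the pairs with $b=2$ or $a=2$'' omits the $c=2$ contributions that your own $i=1$ computation correctly included.
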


\begin{proof}
Recall that the coefficient $\sigma_{ab;i}^{c}$ of the torsion component $T_{ab}^c$ in the $i$-th compatibility equation is
\begin{equation}
\sigma_{ab;i}^{c}=\delta_i^c f_{ab}+\delta_i^a f_{cb}+\delta_i^b f_{ac}, \ \ \textrm{where}\ \ f_{ab}=y^a \frac{\partial F}{\partial y^b}-y^b\frac{\partial F}{\partial y^a}, \  \ \textrm{and so on}.
\end{equation}

Depending on the repetition among the indices $a<b$ and $c$, we have $8$ cases as the following table shows. In the first row, they can take any value different from $ i$. Otherwise, equal indices are in the same cells, and different cells contain different indices. 

\vspace{0.3cm}
\begin{center}
\begin{tabular}{|c||c|c|c|c||l|}
\hline 
 & \multicolumn{4}{c||}{\textrm{indices}} & \textrm{the coefficients} \\ 
\hline 
\hline
1 & $i$ & $a$ & $b$ & $c$ & $\sigma_{ab;i}^{c}=0$\\ 
\hline 
2 & $i=a$ & $b$ & $c$ &  & $\sigma_{ib;i}^{c}=f_{cb}$ \\ 
\hline 
3 & $i=a$ & $b=c$ &   &   & $\sigma_{ib;i}^{b}=0$ \\ 
\hline 
4 & $i=b$ & $a$ & $c$ &   & $\sigma_{ai;i}^{c}=f_{ac}$ \\ 
\hline 
5 & $i=b$ & $a=c$ &   &   & $\sigma_{ai;i}^{a}=0$ \\ 
\hline 
6 & $i=c$ & $a$ & $b$ &   & $\sigma_{ab;i}^{i}=f_{ab}$ \\ 
\hline 
7 & $i=a=c$ & $b$ &   &   & $\sigma_{ib;i}^{i}=2f_{ib}$ \\ 
\hline 
8 & $i=b=c$ & $a$ &   &   & $\sigma_{ai;i}^{i}=2f_{ai}$ \\ 
\hline 
\end{tabular}
\end{center}
\vspace{0.3cm}

To sum up, the surviving coefficients in the $i$-th equation contain at least one index equal to $i$ and, if exactly one index equals to $i$, then the remaining two indices must be different. Thus, the compatibility equations in 3D (in matrix form) are

\vspace{0.3cm}
\begin{center}
\begin{tabular}{|c||c|c|c|c|c|c|c|c|c||c|}
\hline
 & $T_{12}^{1}$ & $T_{12}^{2}$ & $T_{12}^{3}$ & $T_{13}^{1}$ & $T_{13}^{2}$ & $T_{13}^{3}$ & $T_{23}^{1}$ & $T_{23}^{2}$ & $T_{23}^{3}$ & RHS \\ 
\hline 
\hline 
1 & $2f_{12}$ & $0$ & $f_{32}$ & $2f_{13}$ & $f_{23}$ & $0$ & $f_{23}$ & $0$ & $0$ & $-2X_1^{h^*}F$ \\ 
\hline 
2 & $0$ & $2f_{12}$ & $f_{13}$ & $0$ & $f_{13}$ & $0$ & $f_{13}$ & $2f_{23}$ & $0$ & $-2X_2^{h^*}F$ \\ 
\hline 
3 & $0$ & $0$ & $f_{12}$ & $0$ & $f_{12}$ & $2f_{13}$ & $f_{21}$ & $0$ & $2f_{23}$ & $-2X_3^{h^*}F$ \\ 
\hline 
\end{tabular},
\vspace{0.3cm}
\end{center}
and we can group the components in the following way:
\vspace{0.3cm}
\begin{center}
\begin{tabular}{ccccccc}
$f_{23}(-T_{12}^3+T_{13}^2+T_{23}^1)$ & $-$ & $2f_{31}T_{13}^1$ & $+$ & $2f_{12}T_{12}^1$ & $=$ & $-2X_1^{h^*}F$, \\ 
$2f_{23}T_{23}^2$ & $+$ & $f_{31}(-T_{12}^3-T_{13}^2-T_{23}^1)$ & $+$ & $2f_{12}T_{12}^2$ & $=$ & $-2X_2^{h^*}F$, \\ 
$2f_{23}T_{23}^3$ & $-$ & $2f_{31}T_{13}^3$ & $+$ & $f_{12}(T_{12}^3+T_{13}^2-T_{23}^1)$ & $=$ & $-2X_3^{h^*}F$. 
\end{tabular}
\vspace{0.3cm}
\end{center}

Since the coefficients are those of the cross product defined by formula \eqref{n}, we can indeed write the compatibility equations in the form (\ref{comp1}), (\ref{comp2}) and (\ref{comp3}).
\end{proof}

\begin{Lem} 
\label{lemma:03} For any not vertical contact tangent space $T_pM$, the common directional space of $(\ref{comp1})$, $(\ref{comp2})$ and $(\ref{comp3})$ is trivial or it is a one-dimensional linear subspace in $T_pM$, which is the rotational axis of the Finslerian indicatrix. 
\end{Lem}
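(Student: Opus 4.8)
The plan is to read the directional space off the structure of the grouped equations and then decide when it can be nonzero.

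First I would record that, after the grouping of the torsion components, the three equations $(\ref{comp1})$, $(\ref{comp2})$, $(\ref{comp3})$ \emph{decouple}: the $i$-th one involves only the vector $\mathbf{w}_i\in\mathbb{R}^3\cong T_pM$ assembled from the torsion components displayed there, and has the shape $\langle (C\times G)(v),\mathbf{w}_i\rangle=-2X_i^{h^*}F(v)$. The key observation is that the covector $(C\times G)(v)$ multiplying $\mathbf{w}_i$ is the \emph{same} for $i=1,2,3$; only the right-hand sides differ. Letting $v$ run through $T_pM\setminus\{\mathbf 0\}$, the solution set of the $i$-th system is therefore an affine subspace of $T_pM$ whose underlying linear (directional) space equals $W^{\perp}$, where $W:=\operatorname{span}\{(C\times G)(v):v\in T_pM\setminus\{\mathbf 0\}\}\subset T_pM$, and this is independent of $i$. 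So the ``common directional space'' is $W^{\perp}$, and the lemma reduces to showing $\dim W^{\perp}\le 1$ together with the identification of $W^{\perp}$ in the one-dimensional case.

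Next I would prove $\dim W\ge 2$. Since $T_pM$ is not a vertical contact space, the set $U:=\{v\ne\mathbf 0:(C\times G)(v)\ne\mathbf 0\}$ is nonempty, and it is open by continuity of $v\mapsto(C\times G)(v)$. Suppose, towards a contradiction, that $W$ is contained in a line $\mathbb{R}m$. For each $v\in U$ the cross product $(C\times G)(v)=v\times\mathrm{grad}\,F(v)$ is a nonzero vector orthogonal to $v$; as it also lies on $\mathbb{R}m$, this forces $m\perp v$, i.e. $U\subset m^{\perp}$. But a nonempty open subset of $\mathbb{R}^3\setminus\{\mathbf 0\}$ cannot be contained in a plane, a contradiction. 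Hence $\dim W\ge 2$ and $\dim W^{\perp}\le 1$; if $W^{\perp}=\{\mathbf 0\}$ we are in the trivial case. It then remains to treat $\dim W^{\perp}=1$, say $W^{\perp}=\mathbb{R}n$: here $\langle n,(C\times G)(v)\rangle=0$ for every $v$, and by the triple product identity $\langle n,v\times\mathrm{grad}\,F(v)\rangle=\langle n\times v,\mathrm{grad}\,F(v)\rangle$ this says that the derivative of $F$ along the rotation vector field $v\mapsto n\times v$ vanishes on all of $T_pM$; integrating, $F$ is invariant under the one-parameter group of Euclidean rotations about the axis $\mathbb{R}n$, so the Finslerian indicatrix $F^{-1}(1)\cap T_pM$ is a Euclidean surface of revolution with axis $\mathbb{R}n$, and in the homogeneous system each grouped variable $\mathbf{w}_i$ is then forced onto this very line (which is the sense in which $W^{\perp}$ ``lies in $T_pM$''). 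Uniqueness of the axis would follow because a second axis $\mathbb{R}n'$ with $n'\notin\mathbb{R}n$ would make $F$ invariant under the subgroup of $\mathrm{SO}(3)$ generated by both rotations, hence under all of $\mathrm{SO}(3)$; with first-order homogeneity $F$ would be a constant multiple of the Euclidean norm, $\mathrm{grad}\,F(v)\parallel v$ everywhere, and $T_pM$ would be vertical contact — contradiction. Thus $W^{\perp}=\mathbb{R}n$ is exactly the rotational axis of the indicatrix.

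The dimension count and the orthogonality trick excluding $\dim W=1$ are routine; I expect the one substantive point to be the identification of $W^{\perp}=\mathbb{R}n$ with the symmetry axis, i.e.\ translating the pointwise condition $\langle n,C\times G\rangle\equiv 0$ into rotational invariance of the indicatrix about $\mathbb{R}n$ and ruling out a second axis via the fact that two distinct rotation subgroups of $\mathrm{SO}(3)$ already generate the whole group.
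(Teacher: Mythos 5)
Your proof is correct, but your route to the dimension bound is genuinely different from the paper's. The paper fixes one not vertical contact element $v$, takes the integral curve $c$ of $C\times G$ through $v$, and uses $\langle c,c\rangle=\mathrm{const.}$ to get $\langle c'',c\rangle(0)=-\vert c'(0)\vert^2\neq 0$, hence the linear independence of $c'(0)$ and $c''(0)$; evaluating the homogeneous equation along $c$ and differentiating yields the two conditions $\langle c'(0),\vec t\,\rangle=\langle c''(0),\vec t\,\rangle=0$, so the directional space is at most one-dimensional. You instead identify the common directional space with $W^{\perp}$, $W=\mathrm{span}\{(C\times G)(v)\}$, and exclude $\dim W\le 1$ by the open-set/orthogonality argument (a nonempty open set $U$ cannot lie in the plane $m^{\perp}$). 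Both are valid; your argument is shorter and more elementary, while the paper's version produces exactly the data $c'(0),c''(0)$ (and later $c'''(0)$, the Frenet frame, curvature and torsion of the intersection curve) that is reused in the subsequent lemmas to write the solutions of the inhomogeneous system explicitly, which your approach does not supply. The identification of a nonzero $\vec t$ in the directional space with a rotational axis is essentially the paper's argument: both rewrite $\langle C_v\times G_v,\vec t\,\rangle$ via the triple product as the derivative of $F$ along the rotation field $v\mapsto v\times\vec t$ and integrate the flow $e^{tA}$. Your concluding $\mathrm{SO}(3)$ argument for uniqueness of the axis is a bonus not contained in the paper's proof (the paper invokes \cite{V12} for this point later, in Theorem \ref{thm:02}); if you keep it, you should note that invariance of $F$ under the subgroup generated by the two one-parameter rotation groups passes by continuity to its closure, and that this closure is all of $\mathrm{SO}(3)$ because $\mathfrak{so}(3)$ has no two-dimensional subalgebra. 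A last cosmetic point: the solution set of the $i$-th inhomogeneous system may be empty, so speak of the homogeneous solution set (the directional space) rather than of ``the underlying linear space of the solution set''; this does not affect your argument.
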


\begin{proof} The common directional space is given by 
\begin{equation}
\label{directional}
\left<C\times G,\begin{pmatrix} t^1\\t^2 \\ t^3\end{pmatrix}\right> = 0.
\end{equation}
Taking a not vertical contact element $v\in T_pM$, (\ref{directional}) can be refined as follows: since $C_v\times G_v\neq {\bf{0}}$ we can consider the non-trivial integral curve $c$ of the cross product passing through $v$. Since $c'=(C\times G)\circ c$, it follows that $c'(t) \ \bot \ c(t)\sim C_{c(t)}$ with respect to the Euclidean inner product, i.e., 
$$\dotprod{c,c}=\textrm{const.} \ \ \Rightarrow \ \ \dotprod {c', c}(0)=0 \ \ \Rightarrow \ \ \dotprod {c'', c}(0)=-\dotprod{c', c'}(0)=-\dotprod{C_v\times G_v, C_v\times G_v}\neq 0.$$
This means that $c'(0)$ and $c''(0)$ are linearly independent. Therefore, evaluating (\ref{directional}) along $c$, a simple differentiation gives 
\begin{equation}
\label{solutionlines}
\left<c'(0),\begin{pmatrix} t^1\\t^2\\t^3 \end{pmatrix}\right> =0, \ \ \left<c''(0),\begin{pmatrix} t^1\\t^2\\t^3 \end{pmatrix}\right> =0.
\end{equation}
Hence the directional space is at most one-dimensional. Suppose that $\vec{t}\neq {\bf 0}$ is a solution of (\ref{directional}) for any not vertical contact $v\in T_pM$. In case of vertical contact elements, (\ref{directional}) is automatically satisfied because of the vanishing of the vector field $C\times G$. It follows that 
$$0=\left<C_v\times G_v,\vec{t}\ \right> = -\left<C_v\times \vec{t}, G_v\right>= C_v\times \vec{t} \ F \quad (v\in T_pM).$$
Since the integral curves of $C\times \vec{t}$ must satisfy the equation $c(t)\times \vec{t}=c'(t)$, we have $c(t)=e^{tA}c(0)$, where $A$ is the (skew-symmetric) matrix of the linear transformation $v\mapsto v\times \vec{t}$. This means that $(F\circ c)'(t)=0$, i.e., $F$ is constant along the orbits under the action of the one-parameter rotational group generated by $A$. 
\end{proof}

\begin{Cor}  If $(M, F, \nabla)$ is a connected generalized Berwald manifold of dimension three, then the mapping $p\in M\mapsto A_p\subset \wedge^2 T_p^*M \otimes T_pM$ is a smooth affine distribution on the torsion tensor bundle of constant rank $0$ or $1$.
\end{Cor}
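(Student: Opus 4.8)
The plan is to combine the constancy statement of Theorem~\ref{smoothness} with the pointwise description of the directional space furnished by Lemma~\ref{lemma:03}, splitting the argument according to whether $M$ carries a vertical contact tangent space.

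Since $(M,F,\nabla)$ is a connected generalized Berwald manifold, Theorem~\ref{smoothness} already tells us that $p\mapsto A_p$ is a smooth affine distribution of some \emph{constant} rank $r$ on the torsion tensor bundle; so it suffices to determine $r$ fibrewise at one convenient point, and connectedness then carries the value to all of $M$. I would fix $p\in M$ and pass to a $\gamma$-orthonormal coordinate frame at $p$, so that the compatibility equations take the grouped form \eqref{comp1}--\eqref{comp3} and the homogeneous solution set at $p$ is exactly the common directional space $H_p\subset\wedge^2 T_p^*M\otimes T_pM$ of \eqref{comp1}--\eqref{comp3}.

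Now distinguish two cases. If there is a point $p$ at which $T_pM$ is \emph{not} a vertical contact tangent space, then Lemma~\ref{lemma:03} applies at $p$ and describes $H_p$ completely: it is either trivial or a one-dimensional subspace of $T_pM$, namely the rotational axis of the Finslerian indicatrix. Hence $r=\dim H_p$ is $0$ or $1$, and by the previous paragraph this is the rank of $p\mapsto A_p$ at every point, which is the claim. In the remaining case $T_pM$ is a vertical contact tangent space for every $p\in M$; then, by the facts recalled before the Remark, the connected generalized Berwald manifold is Riemannian (and its extremal compatible linear connection is $\nabla^*$), so that all coefficients $\sigma_{ab;i}^{c}$ and all right-hand sides $X_i^{h^*}F$ vanish identically, and this (degenerate) situation has to be dealt with on its own. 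These two alternatives are obviously exhaustive.

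I expect the main obstacle to be the passage between the ``$T_pM$-picture'' of Lemma~\ref{lemma:03} and the statement about $A_p$: one must verify carefully that the common directional space of \eqref{comp1}--\eqref{comp3}, which the lemma presents as a single vanishing condition $\dotprod{C\times G,\vec t}=0$ on a vector $\vec t$, really coincides with the directional space $H_p$ of $A_p$ inside the nine-dimensional torsion fibre — that is, that the regrouping of the nine components $T_{ab}^{c}$ into the three column vectors appearing in \eqref{comp1}--\eqref{comp3} loses no information and that the dimension bookkeeping comes out as claimed. Once this identification is secured, the remaining ingredients — constancy from Theorem~\ref{smoothness}, exhaustiveness of the two cases, and the Riemannian reduction from the listed vertical/horizontal contact facts — are immediate.
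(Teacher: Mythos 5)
Your route (constant rank from Theorem \ref{smoothness}, pointwise evaluation via Lemma \ref{lemma:03}) is exactly how the corollary is meant to be read, but the two steps you defer are where the actual content sits, and the first one does not come out the way you expect if pursued literally. The regrouping behind \eqref{comp1}--\eqref{comp3} is a linear isomorphism of the nine-dimensional fibre $\wedge^2T_p^*M\otimes T_pM$ onto three copies of $T_pM$ (the components $T_{12}^1,T_{13}^1$ enter only the first block, $T_{12}^2,T_{23}^2$ only the second, $T_{13}^3,T_{23}^3$ only the third, and $(T_{12}^3,T_{13}^2,T_{23}^1)$ are transformed by an invertible $3\times 3$ matrix), and the homogeneous system decouples: the $i$-th equation constrains only the $i$-th block $\vec t_i$ through $\dotprod{C\times G,\vec t_i}=0$. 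Consequently the directional space of $A_p$ is isomorphic to $W\oplus W\oplus W$, where $W\subset T_pM$ is the common directional space computed in Lemma \ref{lemma:03}; its dimension is $0$ or $3$, not $0$ or $1$ --- compare the three independent parameters $s,t,u$ in \eqref{undet2}. So the ``rank $0$ or $1$'' has to be understood as the dimension of the common directional space $W$ (the rotational axis, i.e. the line spanned by $D$ in Theorem \ref{thm:02}) under this identification, which is precisely what Lemma \ref{lemma:03} supplies; a proof must state this identification and this reading of the rank explicitly, rather than promise that ``the dimension bookkeeping comes out as claimed.''

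The second deferred point is likewise not closed. If some tangent space is vertical contact then, on a connected generalized Berwald manifold, the manifold is Riemannian and in fact every tangent space is vertical contact (the property is preserved by the linear isometries coming from parallel transport, as in the proof of Theorem \ref{smoothness}); then all coefficients $\sigma_{ab;i}^{c}$ and all right-hand sides $X_i^{h^*}F$ vanish, so $A_p$ is the entire fibre and no bound of the form $0$ or $1$ can be extracted --- this degenerate case is tacitly excluded (it is exactly the non-Riemannian hypothesis of Theorem \ref{thm:02}), and writing that it ``has to be dealt with on its own'' is not dealing with it. Once you note that the alternative is clean --- either every tangent space is vertical contact (Riemannian case, excluded or treated as degenerate), or no tangent space is, in which case Lemma \ref{lemma:03} applies at every point and Theorem \ref{smoothness} transports the common value of $\dim W\in\{0,1\}$ over the connected manifold --- the argument is complete.
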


\begin{Thm}
\label{thm:02} If $M$ is a connected non-Riemannian generalized Berwald manifold of dimension three, then we have the following possible cases.
\begin{itemize}
\item[(UDC)]
The Finslerian indicatrix is a Euclidean surface of revolution at some and therefore all points of the manifold and the rotational axes are generated by a globally well-defined nowhere vanishing covariant constant vector field $D\in \mathfrak{X}(M)$ with respect to any compatible linear connection. One of them is given by
\begin{equation}
\label{oneofthem}
\nabla_X Y=\nabla^*_X Y+\frac{\dotprod{\nabla^*_X D, Y}D-\dotprod{Y,D}\nabla^*_X D}{K^2},
\end{equation}
where $K^2$ is the constant norm square of $D$.
\item[(DC)] We have a uniquely determined flat compatible linear connection given by 
\begin{equation}
\label{flat}
\nabla_X Y=\nabla^*_X Y -\rho(X)\times Y,
\end{equation} 
where $\rho$ is an endomorphism of  $\mathfrak{X}(M)$ satisfying
$$R^*(X,Y)Z=\bigg((\nabla^*_X \rho)(Y)-(\nabla^*_Y \rho)(X)-\rho(X)\times \rho(Y)\bigg)\times Z.$$
\end{itemize}
\end{Thm}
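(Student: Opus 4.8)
The plan is to turn Lemma \ref{lemma:03} together with the constant-rank information of Theorem \ref{smoothness} into the stated dichotomy, and then to build the two distinguished connections by hand. First I would observe that connectedness and the non-Riemannian hypothesis exclude any vertical contact tangent space, since such a tangent space forces $M$ to be Riemannian by the facts recalled after the definition of contact points. Hence Lemma \ref{lemma:03} applies at every point, and because $p\mapsto A_p$ is a smooth affine distribution of constant rank, precisely one of two situations occurs: $\dim H_p=1$ for all $p$, which will give (UDC), or $\dim H_p=0$ for all $p$, which will give (DC).

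In the case $\dim H_p=1$, Lemma \ref{lemma:03} identifies $H_p$ with the rotational axis of the indicatrix, so the indicatrix is a Euclidean surface of revolution at every point and $p\mapsto H_p$ is a smooth line field in $TM$. For any compatible connection $\nabla$ (one exists by hypothesis) the parallel transports carry $H_p$ onto $H_q$ by the proof of Theorem \ref{smoothness}, and, being $\gamma$-metrical, they are Euclidean isometries; consequently they preserve the unit section of the line field. Writing $D$ for such a section, $\nabla_X D\in\langle D\rangle$ while $\langle\nabla_X D,D\rangle=\tfrac12 X\langle D,D\rangle=0$, so $\nabla D=0$: every constant-norm section of $p\mapsto H_p$ is covariant constant with respect to every compatible connection. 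To see that \eqref{oneofthem} is itself compatible I would note that it equals $\nabla^*+A$ with $A(X)Y=\tfrac{1}{K^2}\bigl(\langle\nabla^*_X D,Y\rangle D-\langle Y,D\rangle\nabla^*_X D\bigr)$; since $A(X)$ is $\gamma$-skew, $\nabla$ is $\gamma$-metrical, and a direct computation gives $\nabla D=0$, so its parallel transports are Euclidean isometries fixing $D$. Along a short arc any such isometry is the composition of the parallel transport of a reference compatible connection — which is $F$-isometric and, killing $D$ as well, fixes $D$ — with an orthogonal map that is the identity on $\langle D_p\rangle$; the latter preserves $F|_{T_pM}$ because the indicatrix is a surface of revolution about $\langle D_p\rangle$. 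Concatenating arcs, every $\nabla$-parallel transport preserves $F$. Finally, \eqref{oneofthem} depends on $D$ only through sign-even expressions, so it is well defined from the line field even when the latter is non-orientable, and in the orientable case $D$ is the asserted global vector field.

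In the case $\dim H_p=0$ each $A_p$ is a point, so the compatible connection $\nabla$ is unique. Writing $\nabla=\nabla^*+A$, $\gamma$-metricity makes every $A(X)$ a $\gamma$-skew endomorphism, which in dimension three is $Y\mapsto-\rho(X)\times Y$ for a unique $\rho(X)$ that is $C^\infty(M)$-linear in $X$; this is \eqref{flat}. For flatness I would argue that the holonomy group of $\nabla$ lies in $O(T_pM)$ by metricity and preserves $F|_{T_pM}$ by compatibility, hence sits inside the Euclidean symmetry group of the indicatrix. That group has trivial identity component: a nontrivial connected subgroup of $SO(3)$ is conjugate to $SO(2)$ or equals $SO(3)$, so it would contain a one-parameter rotation group preserving the indicatrix, making the indicatrix a surface of revolution and forcing its axis into $H_p$ by the computation in the proof of Lemma \ref{lemma:03}, against $\dim H_p=0$. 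Thus the symmetry group is discrete, the restricted holonomy group — a connected subgroup of it — is trivial, and hence $R^\nabla=0$. Substituting $A(X)=-\rho(X)\times(\cdot)$ into $R^\nabla(X,Y)=R^*(X,Y)+(\nabla^*_XA)(Y)-(\nabla^*_YA)(X)+[A(X),A(Y)]$, and using the three-dimensional identities $[A(X),A(Y)]Z=(\rho(X)\times\rho(Y))\times Z$ and $(\nabla^*_XA)(Y)Z=-\bigl((\nabla^*_X\rho)(Y)\bigr)\times Z$ (the second because the metrical, orientation-preserving $\nabla^*$ differentiates the cross product by the Leibniz rule), the equation $R^\nabla=0$ turns into exactly the displayed identity for $\rho$.

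The routine part of the argument is the verification that \eqref{oneofthem} and \eqref{flat} have the advertised properties. The main obstacle is the flatness assertion in (DC): recognizing that an indicatrix with no continuous Euclidean symmetry forces the metrical compatible connection to have discrete holonomy, and hence to be flat. A secondary technical point is the passage in (UDC) from the canonical \emph{line field} of rotational axes to a globally defined \emph{vector field} $D$; this requires orientability of the axis line field, failing which one passes to its orientation double cover, but it does not affect the construction \eqref{oneofthem}.
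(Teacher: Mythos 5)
Your proposal is correct and follows essentially the same route as the paper: the dichotomy comes from Lemma \ref{lemma:03} together with the constancy of the solution spaces over the connected manifold, in (UDC) the field $D$ is a parallel unit section spanning the axis line field and \eqref{oneofthem} is checked to be metrical and axis-preserving (hence compatible), and in (DC) uniqueness plus triviality of the identity component of the indicatrix symmetry group gives flatness, after which the standard curvature computation for $\nabla=\nabla^*-\rho(\cdot)\times$ yields the stated identity for $\rho$. Your added caution about orientability of the axis line field is a refinement of a point the paper settles more directly, by extending a chosen vector $D_p$ through parallel transports and invoking the uniqueness of the rotational axes from \cite{V12}.
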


\begin{proof} In the undetermined case (UDC) let $p$ be a point such that (\ref{directional}) has a non-zero solution $\vec{t}$ in $T_pM$. By the previous lemma, the Finslerian indicatrix at the point $p$ is a Euclidean surface of revolution. Using parallel transports with respect to one of the compatible linear connections from $p$ to an arbitrary point of the manifold, it follows that all Finslerian indicatrices are Euclidean surfaces of revolution. Since we have a non-Riemannian generalized Berwald manifold, the rotational axes must be uniquely determined  \cite{V12} (Corollary 8). Choosing a directional vector at a single point $p\in M$, we can extend it to a globally well-defined nowhere vanishing covariant constant vector field $D\in  \mathfrak{X}(M)$ by parallel transports with respect to a compatible linear connection. Especially, $\nabla D=0$, i.e., the parallel transports with respect to $\nabla$ preserve $D$. Conversely, since the Finslerian indicatrices are surfaces of revolution and they are isometric to each other, it is enough to keep the rotational axes invariant by the parallel transports of a metric linear connection in the Riemannian sense to have a compatible linear connection to the Finsler function. Therefore  (\ref{oneofthem}) is one of them. If we have (DC) (determined case), then the solution space of  (\ref{directional}) is trivial at any point of the manifold. To avoid Euclidean surfaces of revolution and (UDC), the unit component of the holonomy group of the compatible linear connection must be trivial. Therefore the compatible linear connection is flat because $R_p(X,Y)\neq {\bf 0}$ would generate a one-parameter subgroup in the holonomy group leaving the indicatrix invariant. Since
$$\nabla^*_X Y=\nabla_X Y+A(X,Y)$$
for some tensor field of type ($1, 2$), it follows that $\nabla$ is a metric linear connection if and only if
$$\dotprod{A(X,Y), Z}=-\dotprod{A(X,Z), Y}.$$
The skew-symmetry implies that  
$$\nabla^*_X Y=\nabla_X Y+\rho(X)\times Y.$$
A simple calculation shows
$$\nabla_X \nabla_Y Z=\nabla^*_X \nabla_Y Z-\rho(X)\times \nabla_Y Z=$$
$$\nabla^*_X \bigg(\nabla^*_Y Z-\rho(Y)\times Z\bigg)-\rho(X)\times \bigg(\nabla^*_Y Z-\rho(Y)\times Z\bigg)=$$
$$\nabla^*_X \nabla^*_Y Z-\bigg((\nabla^*_X \rho) (Y)-\rho(\nabla^*_X Y)\bigg)\times Z-\rho(Y)\times \nabla^*_X Z-\rho(X)\times \nabla^*_Y Z+$$
$$\rho(X)\times \bigg(\rho(Y)\times Z\bigg).$$
For the sake of simplicity suppose that $[X,Y]=0$ before changing the role of $X$ and $Y$. We have that
$$R(X,Y)Z=R^*(X,Y)Z-(\nabla^*_X \rho) (Y)\times Z+ (\nabla^*_Y \rho) (X)\times Z +\bigg(\rho(X)\times \rho(Y)\bigg)\times Z$$
because of $\nabla^*_X Y-\nabla^*_Y X=0$ and, using the Jacobi identity for the cross product. Since $\nabla$ is flat, the formula for the curvature of the L\'{e}vi-Civita connection  follows immediately. 
\end{proof}

\subsection{General method for solution} Following the technique in the proof of Lemma \ref{lemma:03}, let us choose a not vertical contact elemet $v\in T_pM$. The solution of (\ref{solutionlines}) gives a one-dimensional directional space. Using the right hand sides of  (\ref{comp1}), (\ref{comp2}) and (\ref{comp3}), respectively, the solutions of the inhomogeneous version of (\ref{solutionlines}) give one-dimensional affine subspaces.  Therefore we can speak about  \emph{solution lines} belonging to a not vertical contact element $v\in T_pM$.  The vertical contact elements must be horizontal contact.  The possible cases are the following:
\begin{itemize}
\item there is no solution of the compatibility equations,
\item (determined case) the solution is uniquely determined, i.e., the corresponding solution lines intersect each other at the same point for all not vertical contact elements and all vertical contact elements are horizontal contact;
\item (undetermined case) all vertical contact elements are horizontal contact and the corresponding solution lines coincide for all not vertical contact element in $T_pM$. The common directional space is generated by the rotational axis of the Finslerian indicatrix. 
\end{itemize}
Consider the equations  
\begin{equation}
\label{compgeneral}
\left<C\times G,\begin{pmatrix} t_i^1\\t_i^2 \\ t_i^3\end{pmatrix}\right> = b_i, \quad   i\in \{1, 2, 3\}
\end{equation}
in the tangent spaces. Especially,
$$b_i=-2X_i^{h^*}F;$$
cf. equations (\ref{comp1}), (\ref{comp2}) and (\ref{comp3}). 

\begin{Lem} Let $G=\mathrm{grad} \ F$ be the Euclidean gradient on the Finslerian spheres with respect to the compatible Riemannian metric, and consider the vector field $C\times G$. Taking the integral curve $c$ starting from a not vertical contact element $v$ in the tangent space $T_pM$, the solutions of \eqref{compgeneral} at the point $p$ are of the form
\begin{equation}
\label{compgeneralsol}
\begin{pmatrix} \vec{t}_1\\ \vec{t}_2\\ \vec{t}_3\end{pmatrix}=\begin{pmatrix} \omega_{11} & \omega_{12}  & \omega_{13} \\ \omega_{21} & \omega_{22}  & \omega_{23} \\ \omega_{31} & \omega_{32}  & \omega_{33} \end{pmatrix}\begin{pmatrix} c'(0)\\ c''(0)\\ c'(0)\times c'' (0)\end{pmatrix},
\end{equation}
where
$$\omega_{i1}=\frac{b_i(v)}{\ |c'(0)|^2}-\frac{\langle c'(0),c''(0)\rangle}{|c'(0)|^5 \kappa^2(0)} \left(\frac{b_i\circ c}{|c'|}\right)'(0), \ \omega_{i2}=\frac{1}{|c'(0)|^3 \kappa^2(0)} \left(\frac{b_i\circ c}{|c'|}\right)'(0)$$
and $b_i=-2X_i^{h^*}F$,  $i\in \{1, 2, 3\}$.
\end{Lem}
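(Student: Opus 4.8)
The plan is to reproduce the differentiation argument from the proof of Lemma~\ref{lemma:03} and then read off the coefficients. Fix a not vertical contact element $v\in T_pM$ and let $c$ be the integral curve of $C\times G$ with $c(0)=v$, so that $c'(t)=(C\times G)_{c(t)}$; for $t$ near $0$ the point $c(t)$ stays among the not vertical contact (in particular nonzero) elements, where $F$ and hence all the expressions below are smooth. By the argument already used in Lemma~\ref{lemma:03}, $c'(0)$ and $c''(0)$ are linearly independent, so $\bigl(c'(0),\,c''(0),\,c'(0)\times c''(0)\bigr)$ is a basis of $T_pM\cong\rr^3$ and $c'(0)\times c''(0)$ is Euclidean-orthogonal to the first two vectors. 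At a vertical contact element of $T_pM$ the vector field $C\times G$ vanishes, so if $\vec t_i$ is a solution of \eqref{compgeneral} then $b_i$ vanishes there too and that equation is automatically satisfied; hence nothing is lost by restricting attention to not vertical contact elements.

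Suppose $\vec t_i$ solves \eqref{compgeneral}. Evaluating along $c$ gives $\langle c'(t),\vec t_i\rangle=(b_i\circ c)(t)$ identically near $0$; differentiating once and putting $t=0$ yields the two scalar relations $\langle c'(0),\vec t_i\rangle=b_i(v)$ and $\langle c''(0),\vec t_i\rangle=(b_i\circ c)'(0)$. Writing $\vec t_i=\omega_{i1}c'(0)+\omega_{i2}c''(0)+\omega_{i3}\,c'(0)\times c''(0)$ and pairing this expansion with $c'(0)$ and with $c''(0)$, the third basis vector dropping out by orthogonality, the two relations turn into the linear system
\begin{align*}
\omega_{i1}\,|c'(0)|^{2}+\omega_{i2}\,\langle c'(0),c''(0)\rangle&=b_i(v),\\
\omega_{i1}\,\langle c'(0),c''(0)\rangle+\omega_{i2}\,|c''(0)|^{2}&=(b_i\circ c)'(0),
\end{align*}
whose determinant is $|c'(0)|^{2}|c''(0)|^{2}-\langle c'(0),c''(0)\rangle^{2}=|c'(0)\times c''(0)|^{2}$ by the Lagrange identity, which is nonzero precisely because $c'(0),c''(0)$ are linearly independent. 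Thus $\omega_{i1},\omega_{i2}$ are uniquely determined while $\omega_{i3}$ stays free; by Lemma~\ref{lemma:03} the latter parametrises exactly the (at most one-dimensional) common directional space, which is spanned by $c'(0)\times c''(0)$.

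It remains to rewrite the solution of this $2\times2$ system in the normalisation stated in the lemma. Writing $\kappa(0)=|c'(0)\times c''(0)|/|c'(0)|^{3}$ for the curvature of $c$ at the parameter value $0$, so that $|c'(0)\times c''(0)|^{2}=|c'(0)|^{6}\kappa^{2}(0)$, and using $|c'|'=\langle c',c''\rangle/|c'|$, which gives
\[
\left(\frac{b_i\circ c}{|c'|}\right)'(0)=\frac{(b_i\circ c)'(0)\,|c'(0)|^{2}-b_i(v)\,\langle c'(0),c''(0)\rangle}{|c'(0)|^{3}},
\]
Cramer's rule for $\omega_{i2}$ collapses to $\omega_{i2}=\dfrac{1}{|c'(0)|^{3}\kappa^{2}(0)}\bigl(\tfrac{b_i\circ c}{|c'|}\bigr)'(0)$, and substituting this into $\omega_{i1}=\bigl(b_i(v)-\omega_{i2}\langle c'(0),c''(0)\rangle\bigr)/|c'(0)|^{2}$ produces the stated expression for $\omega_{i1}$. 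The conceptual content is entirely in the linear independence of $c'(0)$ and $c''(0)$ borrowed from Lemma~\ref{lemma:03}; the only thing requiring care is this last algebraic rearrangement together with the elementary curvature identities behind it, so I do not expect a genuine obstacle.
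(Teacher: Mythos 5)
Your proposal is correct and follows essentially the same route as the paper: evaluate \eqref{compgeneral} along the integral curve, differentiate to get the two scalar relations $\langle c'(0),\vec t_i\rangle=b_i(v)$ and $\langle c''(0),\vec t_i\rangle=(b_i\circ c)'(0)$, expand $\vec t_i$ in the basis $\bigl(c'(0),c''(0),c'(0)\times c''(0)\bigr)$, and solve the resulting $2\times 2$ system using the Lagrange identity and the curvature normalisation $|c'(0)\times c''(0)|^2=|c'(0)|^{6}\kappa^{2}(0)$. The only cosmetic difference is that you obtain $\omega_{i1}$ by back-substitution into the first equation rather than directly from Cramer's rule, which is equivalent.
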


\begin{proof}
Taking a not vertical contact element $v\in T_pM$, (\ref{compgeneral}) can be refined as follows: since $C_v\times G_v\neq {\bf{0}}$ we can consider the non-trivial integral curve $c$ of the cross product starting at $v$. Since $c'=(C\times G)\circ c$, it follows that $c'(t) \ \bot \ c(t)\sim C_{c(t)}$ with respect to the Euclidean inner product, i.e., 
$$\dotprod{c,c}=\textrm{const.} \ \ \Rightarrow \ \ \dotprod {c', c}(0)=0 \ \ \Rightarrow \ \ \dotprod {c'', c}(0)=-\dotprod{c', c'}(0)=-\dotprod{C_v\times G_v, C_v\times G_v}\neq 0.$$
This means that $c'(0)$ and $c''(0)$ are linearly independent. Therefore, evaluating (\ref{compgeneral}) along $c$, a simple differentiation gives 
$$\left<c'(0),\begin{pmatrix} t_i^1\\t_i^2\\t_i^3 \end{pmatrix}\right> =b_i (v), \ \left<c''(0),\begin{pmatrix} t_i^1\\t_i^2\\t_i^3 \end{pmatrix}\right> =(b_i\circ c)'(0), \quad  i\in \{1, 2, 3\}.$$
Finding the solution in the form 
$$ \vec{t}_i=\omega_{i1} c'(0)+\omega_{i2} c''(0) + \omega_{i3} c'(0)\times c'' (0)$$
we can express the coefficients $\omega_{i1}$ and $\omega_{i2}$. Since
$$ \omega_{i1} \left |c'(0)\right |^2 +\omega_{i2} \left<c'(0),c''(0)\right>=b_i (v) \ \ \textrm{and}\ \ \omega_{i1} \left<c''(0),c'(0)\right> +\omega_{i2} \left| c''(0)\right |^2=(b_i\circ c)'(0),$$
it follows by Cramer's rule that
$$\omega_{i1}=\frac{b_i(v)\left| c''(0)\right |^2 - \left<c'(0),c''(0)\right> (b_i\circ c)'}{| c'(0)\times c''(0)|^2}=\frac{b_i(v)}{\ |c'(0)|^2}-\frac{\langle c'(0),c''(0)\rangle}{|c'(0)|^5 \kappa^2(0)} \left(\frac{b_i\circ c}{|c'|}\right)'(0),$$
$$\omega_{i2}=\frac{\left| c'(0)\right |^2 (b_i\circ c)' - b_i(v) \left<c''(0),c'(0)\right>}{| c'(0)\times c''(0)|^2}=\frac{1}{|c'(0)|^3 \kappa^2(0)} \left(\frac{b_i\circ c}{|c'|}\right)'(0)$$
because of
$$| c'(0)\times c''(0)|^2=|c'(0)|^2|c''(0)|^2-\langle c'(0), c''(0) \rangle^2$$
and
$$\left(\frac{b_i\circ c}{|c'|}\right)'(0)=\frac{|c'(0)|^2 (b_i\circ c)'-b_i(v)\langle c'(0), c''(0) \rangle}{|c'(0)|^3}.$$
\end{proof}

%\begin{Rem} {\emph{In terms of the generating vector field $C\times G$,}}  
%$$\left<C_v\times G_v, C_v\times G_v \right>=\left<c'(0),c'(0)\right>, \ \left< (C\times G)' (v) (C_v\times G_v), C_v\times G_v \right>=\left<c''(0),c'(0)\right>,$$
%$$\left<(C\times G)' (v) (C_v\times G_v), (C\times G)' (v) (C_v\times G_v) \right>=\left<c''(0),c''(0)\right>.$$
%\end{Rem}

According to the previous Lemma, the coefficients $\omega_{i1}$ and $\omega_{i2}$ are uniquely determined. They are related to the curvature of the integral curve of $C\times G$ (in a more general sense: the derivatives of the integral curve up to order two). To provide the arclength parametrization of the integral curve we are motivated to reformulate the argumentation by considering the normalized vector field of $C\times G$, where $G=\mathrm{grad} \ F$ is the Euclidean gradient on the indicatrix. 

\begin{Cor} Let $G=\mathrm{grad} \ F$ be the Euclidean gradient on the Finslerian spheres with respect to the compatible Riemannian metric, and consider the normalized vector field of $C\times G$. Taking the integral curve $c_0$ starting from a not vertical contact element $v$ in the tangent space $T_pM$, the solutions of \eqref{compgeneral} at the point $p$ are of the form
\begin{equation}
\label{compgeneralsolarclength}
 \begin{pmatrix} \vec{t}_1\\ \vec{t}_2\\ \vec{t}_3\end{pmatrix}=\begin{pmatrix} \omega_{11}^0 & \omega_{12}^0  & \omega_{13}^0 \\ \omega_{21}^0 & \omega_{22}^0  & \omega_{23}^0 \\ \omega_{31}^0 & \omega_{32}^0  & \omega_{33}^0 \end{pmatrix}\begin{pmatrix} \vec{T}(0)\\ \vec{N}(0)\\ \vec{B}(0)\end{pmatrix}, 
\end{equation}
where
$$\omega_{i1}^0=b_{i}^0(v), \ \omega_{i2}^0=\frac{(b_{i}^0\circ c_0)'(0)}{\kappa(0)}, \ b_{i}^0=\frac{b_i}{|C\times G|}=-2\frac{X_i^{h^*} F}{|C\times G|}, \quad  i\in \{1, 2, 3\}$$
and $(\vec{T}, \vec{N}, \vec{B})$ is the Frenet-Serret frame along $c_0$. 
\end{Cor}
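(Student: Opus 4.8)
The plan is to rerun the computation in the proof of the previous Lemma, but with the \emph{normalized} field $W:=(C\times G)/|C\times G|$ in place of $C\times G$, so that its integral curve is automatically parametrized by Euclidean arclength and the coefficients can be read off directly from the Frenet--Serret frame instead of via Cramer's rule. First I would fix a not vertical contact element $v\in T_pM$, so $C_v\times G_v\neq \mathbf{0}$, and let $c_0$ be the integral curve of $W$ with $c_0(0)=v$. Since $|W|\equiv 1$ on the complement of the vertical contact elements, $c_0$ is unit speed; hence $c_0'(0)=\vec{T}(0)$, and writing $\vec{N},\vec{B}$ for the principal normal and binormal we have $c_0''(0)=\kappa(0)\vec{N}(0)$ and $\vec{B}(0)=\vec{T}(0)\times \vec{N}(0)$, with $(\vec{T},\vec{N},\vec{B})$ an orthonormal frame along $c_0$. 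Exactly as in the Lemma, since $c_0'=W\circ c_0\ \bot\ C\sim c_0$ we get $\dotprod{c_0,c_0}\equiv \dotprod{v,v}$, whence $\dotprod{c_0'(0),c_0(0)}=0$ and $\dotprod{c_0''(0),c_0(0)}=-\dotprod{c_0'(0),c_0'(0)}=-1\neq 0$; this forces $c_0'(0)$ and $c_0''(0)$ to be linearly independent, so $\kappa(0)=|c_0''(0)|\neq 0$ and the Frenet frame is well defined at $s=0$.

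The key identity is that along $c_0$ the vector $(C\times G)_{c_0(s)}$ equals $|C\times G|_{c_0(s)}\, c_0'(s)$, so evaluating \eqref{compgeneral} at $c_0(s)$ and dividing by $|C\times G|_{c_0(s)}$ turns it into $\dotprod{c_0'(s),\vec{t}_i}=b_i^0(c_0(s))$, where $b_i^0=b_i/|C\times G|=-2X_i^{h^*}F/|C\times G|$. Differentiating once in $s$ and evaluating at $s=0$ yields $\dotprod{\vec{T}(0),\vec{t}_i}=b_i^0(v)$ and $\kappa(0)\dotprod{\vec{N}(0),\vec{t}_i}=(b_i^0\circ c_0)'(0)$. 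Expanding $\vec{t}_i=\omega_{i1}^0\vec{T}(0)+\omega_{i2}^0\vec{N}(0)+\omega_{i3}^0\vec{B}(0)$ in the orthonormal frame, the first two coordinates are precisely these inner products, giving $\omega_{i1}^0=b_i^0(v)$ and $\omega_{i2}^0=(b_i^0\circ c_0)'(0)/\kappa(0)$; the third coordinate $\omega_{i3}^0$ is left undetermined by the equation at the single element $v$ — it is the free parameter along the solution line, pinned down afterwards by the determined/undetermined dichotomy. This is exactly \eqref{compgeneralsolarclength}.

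There is no serious obstacle: the argument is just the reparametrized version of the previous Lemma. The only points requiring care are (i) that $c_0$ is an integral curve of the \emph{normalized} field, so $\kappa(0)\neq 0$ must be re-established directly (done above) rather than merely quoted, and (ii) correctly carrying the scalar factor $|C\times G|$ through \eqref{compgeneral}, which is precisely what converts $b_i$ into $b_i^0$ and collapses the Cramer-rule expressions of the Lemma into the clean arclength formulas. As a consistency check one can also note that $c_0=c\circ\varphi$ for the reparametrization with $\varphi'=1/\bigl(|C\times G|\circ c\circ\varphi\bigr)$, $\varphi(0)=0$, and verify that substituting this into the coefficients $\omega_{i1},\omega_{i2}$ of the Lemma reproduces $\omega_{i1}^0,\omega_{i2}^0$.
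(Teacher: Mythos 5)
Your proposal is correct and is essentially the paper's argument: the paper's proof simply says to follow the steps of the previous Lemma with the normalized field, which is exactly what you do, with the arclength parametrization making $c_0'(0)=\vec{T}(0)$, $c_0''(0)=\kappa(0)\vec{N}(0)$ so the coefficients are read off directly from the orthonormal Frenet frame instead of via Cramer's rule. Your explicit re-derivation of $\kappa(0)\neq 0$ and the bookkeeping of the factor $\lvert C\times G\rvert$ converting $b_i$ into $b_i^0$ are exactly the "straightforward computations" the paper leaves implicit.
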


\begin{proof}
Following the steps in the proof of the previous Lemma, the computations are straightforward.  
\end{proof}

Note that the determined part of (\ref{compgeneralsolarclength}) is lying in the osculating plane of the curve $c_0$ (or the curve $c$) at the starting parameter $0$. To clarify the contribution of the torsion of the integral curve to formula (\ref{compgeneralsolarclength}), we  present the following sufficient condition for the unicity of the solution. 

\begin{Cor} Let $G=\mathrm{grad} \ F$ be the Euclidean gradient on the Finslerian spheres with respect to the compatible Riemannian metric, and consider the normalized vector field of $C\times G$. If the integral curve $c_0$ starting from a not vertical contact element $v$ in the tangent space $T_pM$ has a non-vanishing torsion at the starting parameter, then the solutions of \eqref{compgeneral} at the point $p$ are of the form
\begin{equation}
\label{compgeneralsolarclengthtorsion}
 \begin{pmatrix} \vec{t}_1\\ \vec{t}_2\\ \vec{t}_3\end{pmatrix}=\begin{pmatrix} \omega_{11}^0 & \omega_{12}^0  & \omega_{13}^0 \\ \omega_{21}^0 & \omega_{22}^0  & \omega_{23}^0 \\ \omega_{31}^0 & \omega_{32}^0  & \omega_{33}^0 \end{pmatrix}\begin{pmatrix} \vec{T}(0)\\ \vec{N}(0)\\ \vec{B}(0)\end{pmatrix}, 
\end{equation}
where
$$\omega_{i1}^0=b_{i}^0(v), \ \omega_{i2}^0=\frac{(b_{i}^0\circ c_0)'(0)}{\kappa(0)}, \ \omega_{i3}^0=\frac{(b_{i}^0\circ c_0)''(0)+\kappa^2(0)\omega_{i1}^0-\kappa'(0) \omega_{i2}^0}{\kappa(0)\tau(0)},$$
$$b_{i}^0=\frac{b_i}{|C\times G|}=-2\frac{X_i^{h^*} F}{|C\times G|}, \quad  i\in \{1, 2, 3\}.$$
\end{Cor}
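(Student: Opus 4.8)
The plan is to run the differentiation argument of the two preceding Corollaries one order further, so that the torsion $\tau$ of the integral curve enters, and to check that the hypothesis $\tau(0)\neq 0$ is exactly what is needed to close the system. First I would restrict the compatibility equations \eqref{compgeneral} to the integral curve $c_0$ of the normalized vector field $(C\times G)/|C\times G|$ issuing from the not vertical contact element $v\in T_pM$. Because $c_0'=((C\times G)/|C\times G|)\circ c_0=\vec T$, the field $(C\times G)(c_0(s))$ equals $|C\times G|(c_0(s))\,c_0'(s)$, so dividing the $i$-th equation $\langle C\times G,\vec t_i\rangle=b_i$ by $|C\times G|$ and evaluating along $c_0$ yields the scalar identity $\langle \vec T(s),\vec t_i\rangle=(b_i^0\circ c_0)(s)$, valid for all $s$ near $0$, where $b_i^0=b_i/|C\times G|=-2X_i^{h^*}F/|C\times G|$. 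This identity is the starting point.

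Next I would differentiate this identity twice in the arclength parameter $s$ and substitute the Frenet--Serret formulas $\vec T'=\kappa\vec N$ and $\vec N'=-\kappa\vec T+\tau\vec B$. The first differentiation gives $\kappa(s)\langle \vec N(s),\vec t_i\rangle=(b_i^0\circ c_0)'(s)$, and the second gives $\kappa'(s)\langle \vec N(s),\vec t_i\rangle-\kappa^2(s)\langle \vec T(s),\vec t_i\rangle+\kappa(s)\tau(s)\langle \vec B(s),\vec t_i\rangle=(b_i^0\circ c_0)''(s)$. Here I must invoke the fact, already recorded in the proof of Lemma~\ref{lemma:03}, that $c'(0)$ and $c''(0)$ are linearly independent, whence $\kappa(0)\neq 0$; by continuity $\kappa$ is nonzero near $0$, so the Frenet--Serret frame is well defined and smooth there and the differentiations are legitimate. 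Evaluating the three identities at $s=0$ with $c_0(0)=v$, I can then solve successively: $\langle \vec T(0),\vec t_i\rangle=\omega_{i1}^0=b_i^0(v)$ from the undifferentiated identity; $\langle \vec N(0),\vec t_i\rangle=\omega_{i2}^0=(b_i^0\circ c_0)'(0)/\kappa(0)$ after dividing by $\kappa(0)$; and finally $\langle \vec B(0),\vec t_i\rangle=\omega_{i3}^0=\big((b_i^0\circ c_0)''(0)+\kappa^2(0)\omega_{i1}^0-\kappa'(0)\omega_{i2}^0\big)/(\kappa(0)\tau(0))$ after dividing by $\kappa(0)\tau(0)$. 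The last division is precisely where the assumption $\tau(0)\neq 0$ is used.

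To conclude, since $(\vec T(0),\vec N(0),\vec B(0))$ is an orthonormal basis of $T_pM$ with respect to the compatible Riemannian metric, I expand $\vec t_i=\langle \vec t_i,\vec T(0)\rangle\vec T(0)+\langle \vec t_i,\vec N(0)\rangle\vec N(0)+\langle \vec t_i,\vec B(0)\rangle\vec B(0)$ and read off the matrix formula \eqref{compgeneralsolarclengthtorsion}; in particular all the coefficients $\omega_{ij}^0$ are uniquely determined, so under the torsion hypothesis the solution of the compatibility equations at $p$ is unique. I do not expect a genuine obstacle: the argument is bookkeeping with the Frenet--Serret equations. The only points requiring care are tracking the sign coming from $\vec N'=-\kappa\vec T+\tau\vec B$ in the second differentiation, and confirming that the nonvanishing of $\kappa(0)$ (from Lemma~\ref{lemma:03}) and of $\tau(0)$ (by hypothesis) are exactly what make the two successive divisions — and hence the unicity — valid.
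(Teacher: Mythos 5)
Your proposal is correct and follows essentially the same route as the paper: restrict \eqref{compgeneral} to the arclength-parametrized integral curve $c_0$, differentiate twice, and solve for the three Frenet components, with $\tau(0)\neq 0$ entering only in the final division. The only cosmetic difference is that you expand via the Frenet--Serret formulas directly, whereas the paper records the equivalent identities $\langle c_0'''(0),c_0'(0)\rangle=-\kappa^2(0)$ and $\langle c_0'''(0),c_0''(0)\rangle=\kappa(0)\kappa'(0)$ by differentiating the orthogonality relations.
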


\begin{proof}
Evaluating (\ref{compgeneral}) along $c_0$, a simple differentiation gives 
$$\left<c_0'(0),\begin{pmatrix} t_i^1\\t_i^2\\t_i^3 \end{pmatrix}\right> =b_{i}^0(v), \ \left<c_0''(0),\begin{pmatrix} t_i^1\\t_i^2\\t_i^3 \end{pmatrix}\right> =(b_{i}^0\circ c_0)'(0), \ \left<c_0'''(0),\begin{pmatrix} t_i^1\\t_i^2\\t_i^3 \end{pmatrix}\right> =(b_{i}^0\circ c_0)''(0).$$
The last equation allows us to express the missing parameters $\omega_{13}^0$, $\omega_{23}^0$ and $\omega_{33}^0$ by substitu\-ting the solution of the form (\ref{compgeneralsolarclengthtorsion}). Note that $\left<c_0''(0),c_0'(0)\right>=0$ implies that
$$\left<c_0'''(0),c_0'(0)\right>=-\left<c_0''(0),c_0''(0)\right>=-\kappa^2(0)$$
and $\displaystyle{\left<c_0'''(0),c_0''(0)\right>=\frac{1}{2}\left<c_0'',c_0''\right>'(0)=\frac{1}{2} \left(\kappa^2\right)'(0)=\kappa(0)\kappa'(0)}$.
\end{proof}

\begin{Rem}{\emph{If $v\in T_pM$ is not vertical contact, then the Euclidean and the Finslerian indicatrices are transversally intersecting surfaces at $v$ and the integral curve $c_0$ (or $c$) is a parametrization of the intersection curve. The vanishing of the torsion means that the intersection is a plane curve, i.e.,  it must be a part of a Euclidean circle. Otherwise, the torsion of the intersection curve implies the possible values for the components of the torsion of the uniquely determined compatible linear connection.}}
\end{Rem}

\subsection{Compatible linear connections in the undetermined case (cf. Theorem \ref{thm:02})} Let $p\in M$ be a given point. 
If the solution sets of the compatibility equations \eqref{comp1}--\eqref{comp3} are 1-dimensional, they are parallel lines in $T_pM$, endowed with the inner product coming from the compatible Riemannian metric. Taking a not vertical contact element $v\in T_pM$, formula (\ref{compgeneralsolarclength}) shows that the general form of the solutions is
\begin{equation} \begin{array}{rcl}
\label{undet1}
\colvec{-T_{12}^3+T_{13}^2+T_{23}^1\\-2T_{13}^1\\2T_{12}^1\ }(p) &=&  \omega_{11}^0 \vec{T}(0)+ \omega_{12}^0 \vec{N}(0)+\omega_{13}^0 \vec{B}(0), \\
\\
\colvec{2T_{23}^2\\-T_{12}^3-T_{13}^2-T_{23}^1\\2T_{12}^2}(p) & =&  \omega_{21}^0 \vec{T}(0)+ \omega_{22}^0 \vec{N}(0)+\omega_{23}^0 \vec{B}(0), \\[20pt]
\\
\colvec{\ \ 2T_{23}^3\\-2T_{13}^3 \\T_{12}^3+T_{13}^2-T_{23}^1\ \ }(p) &=& \omega_{31}^0\vec{T}(0)+\omega_{32}^0 \vec{N}(0)+\omega_{33}^0 \vec{B}(0),
\end{array} \end{equation}
where
$$\omega_{i1}^0=b_{i}^0(v), \ \omega_{i2}^0=\frac{(b_{i}^0\circ c_0)'(0)}{\kappa(0)}, \ b_{i}^0=\frac{b_i}{|C\times G|}=-2\frac{X_i^{h^*} F}{|C\times G|}$$
and the parameters $\omega_{i3}^0$ can be arbitrarily chosen. The indicatrix at the point $p$ is a Euclidean surface of revolution with respect to the axis represented by the binormal vector $\vec{B}(0)$. For the torsion components with different indices we have a linear system of equations with an invertible coefficient matrix. Therefore formula (\ref{undet1}) gives all the possible torsion components at the point $p$. The corresponding compatible linear connections are determined by formula \eqref{torsion}.

\subsection{The extremal compatible linear connection in the undetermined case} To find the extremal one among the compatible linear connections we need to minimize the sum of the squares of the torsion components. Let us denote the solutions as
\begin{equation} \begin{array}{rcl}
\label{undet2}
\colvec{-T_{12}^3+T_{13}^2+T_{23}^1\\-2T_{13}^1\\2T_{12}^1\ }(p) &=& 2\colvec{P_1\\P_2\\P_3}+2s \colvec{D_1\\D_2\\D_3}, \\[20pt]
\\
\colvec{2T_{23}^2\\-T_{12}^3-T_{13}^2-T_{23}^1\\2T_{12}^2}(p) & =& 2\colvec{Q_1\\Q_2\\Q_3}+2t \colvec{D_1\\D_2\\D_3}, \\[20pt]
\\
\colvec{\ \ 2T_{23}^3\\-2T_{13}^3 \\T_{12}^3+T_{13}^2-T_{23}^1\ \ }(p) &=& 2\colvec{R_1\\R_2\\R_3}+2u \colvec{D_1\\D_2\\D_3},
\end{array} \end{equation}
where 
$$\colvec{D_1\\D_2\\D_3}=\vec{B}(0), \ \colvec{P_1\\P_2\\P_3}=\frac{1}{2}\left(b_{1}^0(v) \vec{T}(0)+\frac{(b_{1}^0\circ c_0)'(0)}{\kappa(0)} \vec{N}(0)\right),$$
$$\colvec{Q_1\\Q_2\\Q_3}=\frac{1}{2}\left(b_{2}^0(v) \vec{T}(0)+\frac{(b_{2}^0\circ c_0)'(0)}{\kappa(0)} \vec{N}(0)\right), \ \colvec{R_1\\R_2\\R_3}=\frac{1}{2}\left(b_{3}^0(v) \vec{T}(0)+\frac{(b_{3}^0\circ c_0)'(0)}{\kappa(0)} \vec{N}(0)\right),$$
$$s=\frac{1}{2}\omega_{13}^0, \ t=\frac{1}{2}\omega_{23}^0, \ u=\frac{1}{2}\omega_{33}^0$$
are the common directional vector, the determined parts in (\ref{undet1}) and the free parameters, respectively. Extra 2's are inserted to make the forthcoming formulas easy to review: 
\begin{equation}
\label{compconn1}
\begin{tabular}{lll}
$T_{13}^1(p)=-P_2-sD_2$, & $T_{23}^2(p)=Q_1+tD_1$,  & $T_{23}^3(p)=R_1+uD_1$, \\
&&\\
$T_{12}^1(p)=P_3+sD_3$,  & $T_{12}^2(p)=Q_3+tD_3$, & $T_{13}^3(p)=-R_2-uD_2$. \\  
\end{tabular}
\end{equation}
For components with different indices, we have the linear system
\[ \begin{pmatrix} -1 & \ \ 1 & \ \ 1 \\ -1 & -1 & -1 \\ \ \ 1 & \ \ 1 & -1 \end{pmatrix} \colvec{T_{12}^3\\T_{13}^2\\T_{23}^1}(p) = 2\colvec{P_1+sD_1 \\ Q_2+tD_2 \\ R_3+uD_3}.\] 
%\[ \left.\begin{array}{rcrcrcl}
%-T_{12}^3 & + & T_{13}^2 & + & T_{23}^1 & = & P_1+sD_1 \\
%-T_{12}^3 & - & T_{13}^2 & - & T_{23}^1 & = & Q_2+tD_2 \\
% T_{12}^3 & + & T_{13}^2 & - & T_{23}^1 & = & R_3+uD_3 \\
%\hline
%\end{array}\right\}\]
Here the coefficient matrix is invertible, and so the solution is 
\begin{equation}
\label{compconn2}
\colvec{T_{12}^3\\T_{13}^2\\T_{23}^1}(p) = \dfrac{1}{2} \begin{pmatrix} -1 & -1 & \ \ 0 \\ \ \ 1 & \ \ 0 & \ \ 1 \\ \ \ 0 & -1 & -1 \end{pmatrix} 2 \colvec{P_1+sD_1 \\ Q_2+tD_2 \\ R_3+uD_3} = \colvec{-P_1-Q_2-sD_1-tD_2\\ \ \ P_1+R_3+sD_1+uD_3\\-Q_2-R_3-tD_2-uD_3}.
\end{equation}
Using formulas \eqref{compconn1}--\eqref{compconn2}:
\[ \begin{gathered}
\norm{T_p}^2 = (T_{12}^1)^2(p)+(T_{12}^2)^2(p)+(T_{12}^3)^2(p)+(T_{13}^1)^2(p)+(T_{13}^2)^2(p)+(T_{13}^3)^2(p)+(T_{23}^1)^2(p)+\\ (T_{23}^2)^2(p)+(T_{23}^3)^2(p)= as^2+bt^2+cu^2+ds+et+fu+gst+hsu+itu+j,
\end{gathered} \]
where the coefficients are
$$a=2D_1^2+D_2^2+D_3^2, \ b = D_1^2+2D_2^2+D_3^2, \ c=D_1^2+D_2^2+2D_3^2,$$
$$d = 4D_1P_1+2D_1Q_2+2D_1R_3+2D_2P_2+2D_3P_3,$$
$$e=2D_1Q_1+2D_2P_1+4D_2Q_2+2D_2R_3+2D_3Q_3,$$ 
$$f=2D_1R_1+2D_2R_2+2D_3P_1+2D_3Q_2+4D_3R_3,$$
$$g=2D_1D_2, \ h=2D_1D_3,\ i=2D_2D_3,$$
$$j=2P_1^2+P_2^2+P_3^2+Q_1^2+2Q_2^2+Q_3^2+R_1^2+R_2^2+2R_3^2+2P_1Q_2+2P_1R_3+2Q_2R_3. $$

\begin{Thm} 
\label{thm:04} Using the notation
$$k = D_1^2+2D_2^2+2D_3^2, \ l=2D_1^2+D_2^2+2D_3^2, \ m=2D_1^2+2D_2^2+D_3^2, \ \norm{D}^2 = D_1^2+D_2^2+D_3^2,$$
the torsion components of the extremal compatible linear connection belong to the parameters
\[ s^0 = \dfrac{-2dk+eg+fh}{8 \norm{D}^4}, \ t^0 = \dfrac{dg-2el+fi}{8 \norm{D}^4}, \ u^0 = \dfrac{dh+ei-2fm}{8 \norm{D}^4} \]
in formulas \eqref{compconn1}--\eqref{compconn2}.
\end{Thm}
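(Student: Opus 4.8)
The plan is to treat $\norm{T_p}^2$ for what it is --- a quadratic polynomial in the three free parameters $s,t,u$ --- and to find its unique minimum by elementary calculus. First I would record that its Hessian in $(s,t,u)$ is the symmetric matrix
\[ M=\begin{pmatrix} 2a & g & h \\ g & 2b & i \\ h & i & 2c \end{pmatrix}, \]
and observe that, after inserting the expressions for $a,b,c,g,h,i$ in terms of $D_1,D_2,D_3$, one obtains the identity $M=2\bigl(\norm{D}^2 I+DD^{\top}\bigr)$, where $D=(D_1,D_2,D_3)^{\top}=\vec{B}(0)$; indeed $2a=2D_1^2+2\norm{D}^2$, $g=2D_1D_2$, and similarly for the remaining entries. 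Since $D=\vec{B}(0)$ is a nonzero (unit) vector, the matrix $\norm{D}^2 I+DD^{\top}$ is positive definite, hence so is $M$; therefore $\norm{T_p}^2$ is a strictly convex function of $(s,t,u)$ and attains its minimum at the single point where its gradient vanishes.

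Next I would compute that gradient. Setting $\partial_s\norm{T_p}^2=\partial_t\norm{T_p}^2=\partial_u\norm{T_p}^2=0$ gives the linear system
\[ M\colvec{s\\t\\u}=-\colvec{d\\e\\f}, \qquad \textrm{so} \qquad \colvec{s^0\\ t^0\\ u^0}=-M^{-1}\colvec{d\\e\\f}. \]
Because $M$ is a scalar matrix plus a rank-one term, the Sherman--Morrison formula gives
\[ M^{-1}=\frac{1}{2\norm{D}^2}\,I-\frac{DD^{\top}}{4\norm{D}^4}; \]
alternatively one applies Cramer's rule to the $3\times 3$ system (here $\det M=16\norm{D}^6$).

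It then remains to multiply out $-M^{-1}(d,e,f)^{\top}$ and match the result to the stated formulas. Its first entry equals $-\frac{d}{2\norm{D}^2}+\frac{1}{4\norm{D}^4}\bigl(D_1d+D_2e+D_3f\bigr)D_1$; putting this over the denominator $8\norm{D}^4$ and using $D_1^2-2\norm{D}^2=-k$, $D_2^2-2\norm{D}^2=-l$, $D_3^2-2\norm{D}^2=-m$ together with $2D_1D_2=g$, $2D_1D_3=h$, $2D_2D_3=i$, the coefficient of $d$ becomes $-\frac{2k}{8\norm{D}^4}$ and the coefficients of $e$ and $f$ become $\frac{g}{8\norm{D}^4}$ and $\frac{h}{8\norm{D}^4}$, so $s^0=\dfrac{-2dk+eg+fh}{8\norm{D}^4}$. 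The analogous computation for the second and third entries (with $D_1$ replaced by $D_2$, resp.\ $D_3$, in the rank-one term) gives $t^0$ and $u^0$ in the stated form. Finally, substituting $s^0,t^0,u^0$ into \eqref{compconn1}--\eqref{compconn2} yields the torsion components of the extremal compatible linear connection.

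The computation is routine; the one step worth isolating is the recognition of the scalar-plus-rank-one structure $M=2\bigl(\norm{D}^2 I+DD^{\top}\bigr)$, since it simultaneously delivers the strict convexity (hence the existence and uniqueness of the minimizer), a painless inversion, and the packaging of the answer in terms of $k,l,m$ and $\norm{D}^2$. A head-on solution of the $3\times3$ system by Cramer's rule works equally well but conceals why the denominators are powers of $\norm{D}^2$.
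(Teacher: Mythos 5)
Your proposal is correct and follows the same overall strategy as the paper: both treat $\norm{T_p}^2$ as a quadratic polynomial in the free parameters $s,t,u$, set the gradient to zero to get the linear system with coefficient matrix $\begin{pmatrix} 2a & g & h \\ g & 2b & i \\ h & i & 2c \end{pmatrix}$, check that this Hessian is positive definite (so the unique critical point is the global minimizer), and then invert it to read off $s^0,t^0,u^0$. Where you genuinely differ is in how the linear algebra is handled. The paper verifies positive definiteness by computing the corner minors $\Delta_1=2a$, $\Delta_2=4m\norm{D}^2$, $\Delta_3=16\norm{D}^6$ and writes down $H^{-1}=\frac{1}{8\norm{D}^4}\begin{pmatrix} 2k & -g & -h \\ -g & 2l & -i \\ -h & -i & 2m \end{pmatrix}$, with both steps delegated to MAPLE. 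You instead observe the structural identity $M=2\bigl(\norm{D}^2 I+DD^{\top}\bigr)$, which gives positive definiteness immediately (a positive multiple of the identity plus a positive semidefinite rank-one term, with $D=\vec{B}(0)\neq 0$), yields the inverse by Sherman--Morrison, and explains in one line why $\det M=16\norm{D}^6$ and why the entries of the inverse are exactly $2k,2l,2m$ and $-g,-h,-i$ over $8\norm{D}^4$ (via $2\norm{D}^2-D_1^2=k$, etc.). Your identifications check out ($2a=2\norm{D}^2+2D_1^2$, $g=2D_1D_2$, and so on), and the resulting expressions for $s^0,t^0,u^0$ agree with the statement. So the content proved is the same; what your route buys is a computer-free, conceptual justification of the numerical facts the paper quotes, at the cost of no additional machinery beyond a standard rank-one update formula.
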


\begin{proof} Taking $\norm{T_p}^2$ as a function of the variables $s,t,u$, we have to find its global minimum point. At first, we are looking for the critical points:
$$\dfrac{\partial \norm{T_p}^2}{\partial s} = 2as+d+gt+hu = 0, \ \ \dfrac{\partial \norm{T_p}^2}{\partial t} = 2bt+e+gs+iu = 0, $$
$$\dfrac{\partial \norm{T_p}^2}{\partial u} =  2cu+f+hs+it = 0.$$
It is a linear system for the parameters $s,t,u$. Its matrix form is
\begin{equation}
\label{hessian}
\begin{pmatrix} 2a & g & h \\ g & 2b & i \\ h & i & 2c \end{pmatrix}\colvec{s\\t\\u}=\colvec{-d\\-e\\-f}.
\end{equation} 
In the second step we calculate the corner minors. To do this, we need the coefficient matrix, as the Hessian. Using MAPLE program, we find 
%\[ \begin{array}{rcl}
%\Delta_1 & = & 2a > 0, \\
%\Delta_2 & = & \left( D_1^2+D_2^2+D_3^2 \right)  \left( 2D_1^2+2D_2^2+D_3^2 \right) > 0, \\
%\mathrm{det} = \Delta_3 & = & 16 \left( D_1^2+D_2^2+D_3^2 \right) ^3 > 0.
%\end{array} \]
\[ \Delta_1 = 2a > 0, \ \Delta_2 = 4m\norm{D}^2 > 0, \ \Delta_3 = 16 \norm{D}^6 > 0.\]
Thus the coefficient matrix is invertible, so we have a unique triplet of stationary parameters, and the Hessian is positive definite. Therefore the critical point is a global minimizer. Using the inverse of the coefficient matrix of the linear system \eqref{hessian}, we find  
\[ \colvec{s^0\\t^0\\u^0} = H^{-1} \colvec{-d\\-e\\-f}, \ \ \textrm{where}\ \ H^{-1} =\begin{pmatrix} 2a & g & h \\ g & 2b & i \\ h & i & 2c \end{pmatrix}^{-1}= \dfrac{1}{8 \norm{D}^4} \begin{pmatrix} \ 2k & -g & -h \\ -g & \ 2l & -i \\ -h & -i & 2m  \end{pmatrix}.\]
\end{proof}

\subsection{The determined case (cf. Theorem \ref{thm:02})} Let $G=\mathrm{grad} \ F$ be the Euclidean gradient on the Finslerian spheres with respect to the compatible Riemannian metric, and consider the normalized vector field of $C\times G$. If $c_0$ is its integral curve starting from a not vertical contact element $v$ in the tangent space $T_pM$, then we can write the solution in the form (\ref{compgeneralsolarclength}). To determine the missing parameters $\omega_{i3}^0$ consider the action 
$$\Phi_v\colon \mathbb{R}\times T_pM \to \mathbb{R}, \ (t,w)\to \Phi_v(t,w):=F(e^{At}w)$$
of the one-parameter group of rotations around the binormal vector $\vec{B}(0)$ on the Finslerian spheres, where $A$ denotes the skew-symmetric matrix of the mapping $w\mapsto w\times \vec{B}(0)$.

\begin{Lem}
\label{lem:action}
If $\displaystyle{\dfrac{\partial \Phi_v}{\partial t}(0,w)=0}$ for any nonzero element $w\in T_pM$,
then the Finslerian spheres are Euclidean surfaces of revolution about the axis generated by the binormal vector $\vec{B}(0)$.
\end{Lem}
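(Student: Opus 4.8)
The plan is to upgrade the infinitesimal hypothesis at $t=0$ to full invariance of $F$ under the entire one-parameter rotation group $\{e^{At}\}_{t\in\rr}$, and then read off the conclusion geometrically. First I would fix a nonzero $w\in T_pM$ and set $g(t):=\Phi_v(t,w)=F(e^{At}w)$. Since all powers of the fixed matrix $A$ commute, $e^{A(t_0+s)}=e^{As}e^{At_0}$, so for every $t_0\in\rr$ we have $g(t_0+s)=F\bigl(e^{As}(e^{At_0}w)\bigr)=\Phi_v(s,e^{At_0}w)$, and $e^{At_0}w\neq{\bf 0}$ because $e^{At_0}$ is an orthogonal (hence invertible) transformation. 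Differentiating in $s$ at $s=0$ gives $g'(t_0)=\frac{\partial \Phi_v}{\partial t}(0,e^{At_0}w)$, which vanishes by hypothesis applied to the nonzero vector $e^{At_0}w$.

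Since $t_0$ was arbitrary, $g'\equiv 0$, hence $g$ is constant and $F(e^{At}w)=F(w)$ for all $t\in\rr$ and all $w\in T_pM$ (the case $w={\bf 0}$ being trivial). Thus $F\circ e^{At}=F$ for every $t$, i.e., $F$ restricted to $T_pM$ is invariant under the one-parameter group generated by $A$. By definition $A$ is the skew-symmetric matrix of $w\mapsto w\times\vec{B}(0)$, so $\ker A=\rr\vec{B}(0)$ and $e^{At}$ is the Euclidean rotation about the axis $\rr\vec{B}(0)$ through angle $|\vec{B}(0)|\,t=t$ (recall that $\vec{B}(0)$ is a unit vector of the Frenet--Serret frame). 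Consequently $\{e^{At}:t\in\rr\}$ is exactly the full group of Euclidean rotations fixing the line $\rr\vec{B}(0)$ (a reparametrization of $SO(2)$).

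Invariance of $F$ under this group means that every level set $F^{-1}(c)\cap T_pM$ — in particular the Finslerian unit sphere at $p$ — is carried into itself by all rotations about $\rr\vec{B}(0)$, which is precisely the statement that the Finslerian sphere at $p$ is a Euclidean surface of revolution with axis generated by $\vec{B}(0)$. Since the indicatrices at different points are isometric images of one another under the parallel transport of a compatible linear connection, the same conclusion then propagates to every point of the manifold, in accordance with case (UDC) of Theorem \ref{thm:02}.

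The only step that requires a little care is the first one: the hypothesis controls only the derivative at $t=0$, and what makes it propagate to every $t$ is the group law $e^{A(t_0+s)}=e^{As}e^{At_0}$ combined with the fact that we may legitimately invoke the hypothesis at the translated, still nonzero, vector $e^{At_0}w$. Everything after that is routine, so I do not expect a genuine obstacle here.
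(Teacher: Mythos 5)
Your argument is correct and is essentially the paper's own proof: you use the group law $\Phi_v(t_0+s,w)=\Phi_v(s,e^{At_0}w)$ to transfer the vanishing derivative at $t=0$ to every parameter value, conclude that $\Phi_v$ is constant in $t$, and read off rotational invariance of the Finslerian spheres about $\mathbb{R}\vec{B}(0)$. The extra checks (nonvanishing of $e^{At_0}w$, identification of $\{e^{At}\}$ with the rotations fixing the axis) and the closing remark on propagating to other points are harmless additions beyond what the lemma requires.
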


\begin{proof} Since $\Phi_v(t+s,w)=\Phi_v(t,e^{sA}w)$, it follows that 
$$\dfrac{\partial \Phi_v}{\partial t}(s,w)=\dfrac{\partial \Phi_v}{\partial t}(0,e^{sA}w).$$
Therefore the vanishing of the derivatives at $t=0$ implies that $\Phi_v$ is independent of the first variable, i.e.,  the Finslerian spheres are Euclidean surfaces of revolution about the axis generated by the binormal vector $\vec{B}(0)$.
\end{proof}

Since the Euclidean surfaces of revolution belong to the undetermined case, there is a non-zero element $w\in T_pM$ such that
$$\dfrac{\partial \Phi_v}{\partial t}(0,w)\neq 0$$
(see Lemma \ref{lemma:03}, Lemma \ref{lem:action} and Theorem \ref{thm:02}), thus we have that
$$\left<C_w\times G_w,\vec{B}(0)\right> =-\left<C_w\times \vec{B}(0), G_w\right>= \dfrac{\partial \Phi_v}{\partial t}(0,w) \neq 0.$$
%because $G=\mathrm{grad} \ F$ is the Euclidean gradient on the Finslerian spheres with respect to the compatible Riemannian metric and $A$ is the skew-symmetric matrix of the mapping $w\mapsto w\times \vec{B}(0) \sim C_w\times \vec{B}(0)$. 
To sum up, the uniquely determined solution must be of the form
\begin{equation}
\label{compgeneralsolarclengthfinal}
 \begin{pmatrix} \vec{t}_1\\ \vec{t}_2\\ \vec{t}_3\end{pmatrix}=\begin{pmatrix} \omega_{11}^0 & \omega_{12}^0  & \omega_{13}^0 \\ \omega_{21}^0 & \omega_{22}^0  & \omega_{23}^0 \\ \omega_{31}^0 & \omega_{32}^0  & \omega_{33}^0 \end{pmatrix}\begin{pmatrix} \vec{T}(0)\\ \vec{N}(0)\\ \vec{B}(0)\end{pmatrix}, 
\end{equation}
where  
$$\omega_{i1}^0=b_{i}^0(v), \ \omega_{i2}^0=\frac{(b_{i}^0\circ c_0)'(0)}{\kappa(0)},$$
$$\omega_{i3}^0=-\frac{1}{\left<C_w\times G_w,\vec{B}(0)\right>}\left(\omega_{i1}^0\left<C_w\times G_w,\vec{T}(0)\right>+\omega_{i2}^0\left<C_w\times G_w,\vec{N}(0)\right>+2 X_i^{h^*} F (w)\right),$$
$$b_{i}^0=\frac{b_i}{|C\times G|}=-2\frac{X_i^{h^*} F}{|C\times G|}, \quad i\in \{1 ,2, 3\}$$
and $(\vec{T}, \vec{N}, \vec{B})$ is the Frenet-Serret frame along the integral curve $c_0$ of the normalized vector field of $C\times G$ starting from a not vertical contact element $v\in T_pM$. Instead of the specification of the element $w$, we can integrate on the Euclidean unit sphere $\sigma_*$ with respect to the compatible Riemannian metric:
$$\omega_{i3}^0\left<C_w\times G_w,\vec{B}(0)\right>^2=$$
$$-\left<C_w\times G_w,\vec{B}(0)\right>\left(\omega_{i1}^0\left<C_w\times G_w,\vec{T}(0)\right>+\omega_{i2}^0\left<C_w\times G_w,\vec{N}(0)\right>+2 X_i^{h^*} F (w)\right),$$
i.e.,
$$\omega_{i3}^0\int_{\sigma_*} \left<C\times G,\vec{B}(0)\right>^2=$$
$$-\int_{\sigma_*}\left<C\times G,\vec{B}(0)\right>\left(\omega_{i1}^0\left<C\times G,\vec{T}(0)\right>+\omega_{i2}^0\left<C\times G,\vec{N}(0)\right>+2 X_i^{h^*} F \right),$$
where 
$$\int_{\sigma_*} \left<C\times G,\vec{B}(0)\right>^2\neq 0.$$

\section{Examples: 3-dimensional Randers spaces}

To illustrate the ideas and methods presented above, we consider a special class of Finsler manifolds.

\begin{Def} A Finsler manifold is called a \emph{Randers manifold} if the Finsler function has the form
\[ F(x,y) = \alpha(x,y) + \beta(x,y),  \]
where $\alpha$ is a norm coming from a Riemannian metric on $M$ given by $\alpha(x,y)=\sqrt{\alpha_{ij}(x)y^{i}y^{j}}$ in a local basis $\partial/\partial u^1, \ldots, \partial/\partial u^n$ and $\beta$ comes from a 1-form given by $\beta(x,y)=\beta_j(x)y^j$ such that $\alpha^{ij}\beta_i\beta_j< 1$. 
\end{Def}

Both the metric components of the Riemannian part and the components of the perturbating term are considered on the tangent manifold as composite functions $\alpha_{ij}(x)$ and $\beta_k(x)$, where $x=(x^1, \ldots, x^n)$.
It is well-known \cite{Vin1} that the Riemannian part is compatible to the Finsler function of a Randers manifold, i.e.,  $\gamma:=\alpha$ is a convenient choice for a compatible Riemannian metric. In what follows, we consider the Finsler function  $F=\alpha+\beta$ on a connected 3-dimensional manifold $M$. Let a point $p\in M$ be given and suppose\footnote{If $\beta_p={\bf 0}$ then $T_pM$ is a vertical contact tangent space and there is nothing to compute. Such a Randers manifold is a generalized Berwald manifold if and only if it  is a Riemannian manifold, i.e., the perturbating term is zero at each point of the manifold. Indeed, using the parallel transports induced by the compatible linear connection, the quadratic Finslerian indicatrix at a single point implies that the Finslerian indicatrix is quadratic at each point of the manifold.} that $\beta_p\neq {\bf 0}$. In order to make the computations easier, we choose local coordinates around $p$ such that 
\begin{itemize}
\item the coordinate vector fields form an orthonormal basis at $p$ with respect to the compatible Riemannian metric $\alpha$, i.e. $\alpha_{ij}(p)=\delta_{ij}$,
\item $\beta_1(p) = \beta_2(p)=0$ and $K:=\beta_3(p) \neq 0$, i.e., the coordinate vector fields $\partial/\partial u^1$ and $\partial/\partial u^2$ span the kernel of the linear functional  $\beta$ at the point $p$.  
\end{itemize}

Under these choices of the coordinate vector fields
\[F(x,y)=\sqrt{\delta_{ij}y^i y^j} +\beta_3(x)y^3 = \sqrt{(y^1)^2+(y^2)^2+(y^3)^2} + Ky^3. \]
The Finslerian spheres in $T_pM$ are given by the equations of type
\[ (y^1)^2+(y^2)^2+(1-K^2) \left(y^3+\dfrac{Kc}{1-K^2} \right)^2 = \dfrac{c^2}{1-K^2}. \]
For any given $c>0$ it is an ellipsoid with center $\left(0,0,-\dfrac{Kc}{1-K^2} \right)$ with axes belonging to the coordinate vector fields at $p\in M$. The Finslerian spheres are rotationally symmetric with respect to the axis of $y^3$. Therefore, by  Theorem \ref{thm:02}, we are in the undetermined case, i.e., there must be infinitely many compatible linear connections (if there are any), and the direction space of the solutions of the compatibility equations in $T_pM$ is given by the vector $(0,0,1)$. 

\subsection{The vector field $C\times G$ and its integral curves} The partial derivatives of $F$ with respect to the vectorial directions are
\[\dfrac{\partial F}{\partial y^k}(x,y)= \dfrac{y^k}{\sqrt{(y^1)^2+(y^2)^2+(y^3)^2}} + \delta^3_k K = \dfrac{y^k}{\alpha(x,y)} + \delta^3_k K \]
and the Euclidean gradient vector field of $F$ is
\[ G(x,y)=\dfrac{C}{\alpha}(x,y)+\left(0,\, 0,\, K \right)= \dfrac{1}{\alpha(x,y)}\left(y^1, \, y^2, \,y^3+K\alpha(x,y)\right), \]
where $C$ is the radial vector field. The cross product is
\[ C\times G = \dfrac{1}{\alpha(x,y)} \begin{vmatrix}
e_1 & e_2 & e_3 \\
y^1 & y^2 & y^3 \\
y^1 & y^2 & y^3+K\alpha(x,y)
\end{vmatrix} = K \colvec{\ \ y^2 \\ -y^1 \\ \ 0}. \]
Therefore, an element $v \in T_p M$ is vertical contact if and only if $v=(0,0,v^3)$, i.e.,  the vertical contact points are the elements of the $y^3$ coordinate axis. To simplify the formalism let us consider the vector field
\[\dfrac{C\times G}{K}= \colvec{\ \ y^2 \\ -y^1 \\ \ 0} \] 
instead of the normalized version of $C\times G$. Taking a not vertical contact element $v=(v^1,v^2,v^3) \in T_p M$, it is easy to see that the integral curve starting from $v$ and its derivatives can be given as 
\[ \arraycolsep=1pt \begin{array}{rclrccccc}
c(t) & = & \colvec{r \sin(t+t_0)\\r\cos(t+t_0)\\v^3}, &&
 c(0)&=&\colvec{r \sin t_0 \\r\cos t_0 \\v^3} &=& \colvec{v^1\\v^2\\v^3}, \\[25pt]
c'(t) & = & \colvec{\ \ r \cos(t+t_0)\\-r\sin(t+t_0)\\ \ 0}, &&
c'(0)&=&\colvec{\ \ r \cos t_0 \\-r\sin t_0 \\ \ 0} &=& \colvec{\ \ v^2\\-v^1\\ \ 0},\\[25pt]
c''(t) & = & \colvec{-r \sin(t+t_0)\\-r\cos(t+t_0)\\ \ 0}, &&
c''(0)&=&\colvec{-r \sin t_0\\-r\cos t_0\\ \ 0} &=& \colvec{-v^1\\-v^2\\ \ 0}. 
\end{array} \]

\subsection{The solution lines} Let us write the compatibility equations in the form 
\begin{equation}
\label{raderseq} \left< \dfrac{C\times G}{K},\colvec{t_i^1\\t_i^2 \\ t_i^3}\right> = -\dfrac{2}{K}X_i^{h^*} F,  \quad  i\in \{1, 2, 3\},
\end{equation} 
see \eqref{compgeneral}. To determine the directional space of the solutions at a not vertical contact $v$, we need to solve the system
\[ \arraycolsep=1pt \left.\begin{array}{rcl}
 \dotprod{c'(0), \vec{t}\ }  &=& 0 \\[2pt]
 \dotprod{c''(0), \vec{t}\ } &=& 0 \\
 \end{array}\right\} \Longleftrightarrow 
  \left.\begin{array}{rcrcl}
  v^2t^1 &-& v^1t^2 &=& 0 \\[2pt]
 -v^1t^1 &-& v^2t^2 &=& 0 \\
 \end{array}\right\} \Longleftrightarrow 
 \begin{pmatrix}
 \ \ v^2 & -v^1 \\ -v^1 & -v^2
 \end{pmatrix} \colvec{t^1\\t^2} = \colvec{0\\0},\]
see the proof of Lemma \ref{lemma:03}. It is clear that $t^3$ can be arbitrarily chosen and the direction space is the line generated by $(0,0,1)$. Since $\nabla^*$ is the L\'{e}vi-Civita connection of $\alpha$, we have that
\[ X_i^{h^*} F=X_i^{h^*}\alpha + X_i^{h^*} \beta = X_i^{h^*} \beta. \]
Furthermore,
\[ X_i^{h^*} \beta = \frac{\partial \beta_s y^s}{\partial x^i}-y^j {\Gamma}^{k*}_{ij}\circ \pi \frac{\partial \beta_s y^s}{\partial y^k} = \frac{\partial \beta_s}{\partial x^i}y^s-y^j {\Gamma}^{k*}_{ij}\circ \pi  \beta_k = y^j \left(  \frac{\partial \beta_j}{\partial x^i} - \Gamma^{3*}_{ij} \circ \pi \beta_3  \right) \]
and, consequently, 
\[ - \frac{2}{K} X_i^{h^*} F = 2 \left( {\Gamma}^{3*}_{ij}\circ \pi - \frac{1}{K} \frac{\partial \beta_j}{\partial x^i} \right)y^j. \]
Introducing the notation
\begin{equation} \label{C}
C_{j;i}:= \Gamma^{3*}_{ij}\circ \pi - \frac{1}{K} \frac{\partial \beta_j}{\partial x^i},
\end{equation}
\eqref{raderseq} takes the form
$$\left<\colvec{\ \ y^2 \\ -y^1 \\ 0}, \colvec{t_i^1\\t_i^2 \\ t_i^3}\right> = 2 C_{j;i}y^j, \ \ \textrm{i.e.},  \ \ y^2t_i^1 - y^1t_i^2 = 2 \left( C_{1;i}y^1+C_{2;i}y^2+C_{3;i}y^3\right), \quad   i\in \{1, 2, 3\}.$$
Differentiating these equations with respect to $y^1$, $y^2$ and $y^3$, the comparison of the coefficients gives that 
 the compatibility equations have solutions if and only if $C_{3;i}=0$. Furthermore, the determined parts are given by $t_i^1= 2C_{2;i}$ and $t_i^2= -2C_{1;i}$, respectively. 

\begin{Rem}
{\emph{The geometric meaning of conditions $C_{3;i}=0$ is that $\beta$ has a dual vector field of constant length with respect to the compatible Riemannian metric $\alpha$; for details see \cite{Vin1} and \cite{RandersGBM}.}}
\end{Rem}

\subsection{The torsion components of the compatible linear connections} They can be given by substituting 
\[\colvec{D_1\\D_2\\D_3}=\colvec{0\\0\\1}, \ 
\colvec{P_1\\P_2\\P_3}=\colvec{\ \ C_{2;1}\\-C_{1;1}\\0}, \
\colvec{Q_1\\Q_2\\Q_3}=\colvec{\ \ C_{2;2}\\-C_{1;2}\\0}, \
\colvec{R_1\\R_2\\R_3}=\colvec{\ \ C_{2;3}\\-C_{1;3}\\0} \]
in formulas \eqref{compconn1} and \eqref{compconn2}. The components 
$$T_{13}^1(p)=-P_2-sD_2=C_{1;1}, \ T_{23}^2(p)=Q_1+tD_1=C_{2;2}, \ T_{23}^3(p)=R_1+uD_1=C_{2;3},$$
$$T_{13}^3(p)=-R_2-uD_2=C_{1;3}, \ T_{12}^3(p)=-P_1-Q_2-sD_1-tD_2=C_{1;2}-C_{2;1}$$ 
are uniquely determined. Furthermore, 
$$T_{12}^1(p)=P_3+sD_3=s, \ T_{12}^2(p)=Q_3+tD_3=t, \ T_{13}^2(p)=P_1+R_3+sD_1+uD_3=C_{2;1}+u,$$
$$T_{23}^1(p)=-Q_2-R_3-tD_2-uD_3=C_{1;2}-u.$$
Using that  
$$d= 4D_1P_1+2D_1Q_2+2D_1R_3+2D_2P_2+2D_3P_3= 0,$$
$$e= 2D_1Q_1+2D_2P_1+4D_2Q_2+2D_2R_3+2D_3Q_3= 0,$$
$$f= 2D_1R_1+2D_2R_2+2D_3P_1+2D_3Q_2+4D_3R_3= 2D_3(P_1+Q_2)=2C_{2;1}-2C_{1;2},$$
$$g= 2D_1D_2 =0, \ h = 2D_1D_3 =0, \ i = 2D_2D_3 =0, \ k = D_1^2+2D_2^2+2D_3^2=2,$$
$$l=2D_1^2+D_2^2+2D_3^2=2,\ m = 2D_1^2+2D_2^2+D_3^2=1, \ \norm{D}^2 = D_1^2+D_2^2+D_3^2=1,$$
Theorem \ref{thm:04} shows that the coefficients of the extremal compatible linear connection belong to the parameters 
$$s^0 = \dfrac{-2dk+eg+fh}{8 \norm{D}^4} = 0, \ t^0 = \dfrac{dg-2el+fi}{8 \norm{D}^4} = 0,$$
$$u^0 = \dfrac{dh+ei-2fm}{8 \norm{D}^4} = \dfrac{4C_{2;1}-4C_{1;2}}{8}=\dfrac{C_{2;1}-C_{1;2}}{2}.$$

\section{Acknowledgement}

The authors would like to express their very great appreciation to Professor József Szilasi for his valuable and constructive suggestions.

\end{document}